\newtheorem{theorem}{Theorem}
\newtheorem{lemma}{Lemma}
\newtheorem{corollary}[theorem]{Corollary}
\newtheorem{proposition}{Proposition}[section]
\newtheorem{question}{Question}
\newcommand{\bbZ}{\mathbb{Z}}
\newcommand{\bbR}{\mathbb{R}}
\newcommand{\bbC}{\mathbb{C}}
\newcommand{\bbN}{\mathbb{N}}
\newcommand{\bbQ}{\mathbb{Q}}
\newcommand{\mca}{\mathcal{A}}
\newcommand{\mcb}{\mathcal{B}}
\newcommand{\mch}{\mathcal{H}}
\newcommand{\mcj}{\mathcal{J}}
\newcommand{\mcl}{\mathcal{L}}
\newcommand{\mco}{\mathcal{O}}
\newcommand{\mcs}{\mathcal{S}}
\newcommand{\mcu}{\mathcal{U}}
\newcommand{\supp}{\textrm{supp}}
\newcommand{\spm}{\textrm{sp}}
\newcommand{\nri}{n\rightarrow\infty}
\begin{document}
\title[ ] {Spectra of Cayley graphs of the lamplighter group and random Schr\"{o}dinger operators}

\bibliographystyle{plain}

\thanks{  }


\maketitle




\begin{center}
\textbf{Rostislav Grigorchuk $\&$ Brian Simanek}
\end{center}


\begin{abstract}
We show that the lamplighter group $\mcl=\bbZ/2\bbZ\wr\bbZ$ has a system of  generators  for  which  the  spectrum of the  discrete  Laplacian on  the  Cayley  graph  is a  union  of  an interval  and  a  countable  set  of  isolated  points  accumulating  to a point outside this  interval.  This  is  the  first example  of a  group  with  infinitely  many  gaps  in  the  spectrum of  Cayley  graph. The result is obtained by a careful study of spectral  properties  of  a one-parametric  family  $a+a^{-1}+b+b^{-1} - \mu c$ of  convolution operators  on  $\mcl$  where  $\mu$  is a  real  parameter.

Our results show that the spectrum is a pure point spectrum for each value of  $\mu$, the eigenvalues are solutions  of  algebraic equations  involving  Chebyshev polynomials  of  the second kind,  and  the  topological  structure of the spectrum  makes a bifurcation when  parameter  $\mu$  passes  the points  $1$  and  $-1$.  Namely  if  $|\mu| \leq 1$  the  spectrum  is  the  interval  while  when  $|\mu| >  1$ it  is a  union  of  the  interval  and  a  countable  set  of points accumulating  to a point  outside  the  interval.
\end{abstract}

\vspace{5mm}

\section{Introduction}\label{intro}

The study of spectra of finitely generated (non-commutative) groups is a challenging problem initiated by Kesten in \cite{Kesten} and related to many topics in mathematics.  Let $G$ be a group generated by the set $S\subset G$.  By the spectrum of $G$, we mean the spectrum of its Cayley graph $\Gamma(G,S)$, i.e. the spectrum of the Markov operator $M$ in $\ell^2(G)$ corresponding to a simple random walk on $G$ (that is, the random walk given by equal probabilites of the generators and their inverses).  Equivalently, one can think about the spectrum of the discrete Laplace operator $\Delta=I-M$, where $I$ is the identity operator.  A more general point of view is to consider spectra of all Markov operators $M_P$ corresponding to symmetric probability distributions $P$ on the set of generators and its inverses.  Even more informative invariants are the spectral measures $\nu_P$ associated with $M_P$ or the closely related spectral distribution function $N_P(x)$ associated with $\Delta_P$, which allows one to not only determine the spectrum but also calculate the probabilities $\{P_{1,1}^{(n)}\}_{n\in\bbN}$ of the return to the identity $1\in G$ and some other asymptotic characteristics of a group.

Currently, little is known about the possible shape of the spectrum of $M$ (denoted $\mathrm{sp}(M)$) as a set or about the possibilities for the decomposition of the spectral measure $\nu$ into its absolutely continuous, singular continuous, and pure point components.  Also very little is known about how $N(x)$ behaves near $0$ or how $\spm(M)$ depends on the generating set $S$.  For example, it is unknown if $\spm(M)$ can be a Cantor set or if $\nu$ can simultaneously have singular and absolutely continuous parts.

An even larger family of operators in $\ell^2(G)$ given by convolutions of operators determined by elements of the group algebra $\bbC[G]$ or even operators in $(\ell^2(G))^n$ of multiplication by matrices $A\in M_n(\bbC[G])$ can be taken into account.  Questions about their spectra, spectral measures, and other asymptotic characteristics are of great importance in many areas of mathematics such as Novikov-Shubin invariants, Atiyah's $L^2$-Betti numbers, etc. 

In \cite{GZ01}, A. Zuk and the first author showd that the Lamplighter group $\mcl=\bbZ/2\bbZ\wr\bbZ$ has a generating set $\{a,b\}$ with respect to which the spectrum of $M$ is pure point.  Furthermore, the eigenvalues are of the form $\cos\frac{p}{q}\pi$, where $q=2,3,\ldots$, $1\leq p<q$, and $(p,q)=1$.  These eigenvalues densely pack the interval $[-1,1]$ and so as a set, $\spm(M)=[-1,1]$.  The spectral measure $\nu$ is discrete with the mass at $\cos\frac{p}{q}\pi$ equal to $(2^q-1)^{-1}$.  This was the first example of a group and generating set with pure point spectrum of a Markov operator.

In \cite{G15,GV}, Grabowski and Virag observed that if one considers not the operator of convolution with $\frac{1}{4}\left(a+a^{-1}+b+b^{-1}\right)\in\bbZ[\mcl]$ (which is a Markov operator for $\Gamma(\mcl,\{a,b\})$), but instead with $\frac{1}{2+\beta}(a+a^{-1}+\beta c)$ corresponding to the anisotropic random walk given by the distribution $P$ such that $P(a)=P(a^{-1})=\frac{1}{2+\beta}$ and $P(c)=\frac{\beta}{2+\beta}$, where $c=b^{-1}a$ and $\beta\in\bbR_+$, then for large values of $\beta$, the spectral measure $\nu$ is a purely singular continuous measure.  Thus, there is a symmetric random walk on $\mcl$ with a Markov operator that has singular continuous spectrum.  The proof of this latter fact is based on the reduction to the case of random Schr\"{o}dinger operators and a result of Martinelli and Micheli \cite{MM}.

The system $\{a,c\}$ of generators of $\mcl$ is a natural one because of the algebraic structure of $\mcl$ as a semi-direct product:
\[
\mcl=\left(\bigoplus_{\bbZ}\bbZ/2\bbZ\right)\rtimes \bbZ
\]
where a generator $a$ of the ``active" group $\bbZ$ acts on the abelian group  given by the direct sum as the automorphism induced by the shift in the index set $\bbZ$.  The generator $c$ then corresponds to the element $(\ldots,0,0,1,0,0,\ldots)\in\bigoplus_{\bbZ}\bbZ/2\bbZ$.  The generators $\{a,b\}$ correspond to the states of a Mealy type automaton machine $\mca$ over a binary alphabet as shown in Figure 1.

\begin{figure}
  \centering
  \includegraphics[width=0.55\textwidth]{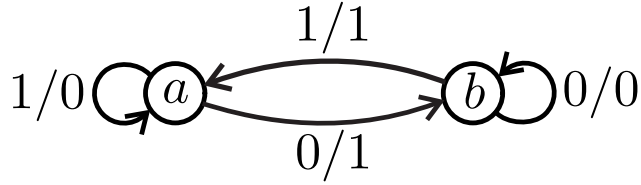}
\caption{The automaton realization of $\mcl$.}
\label{fig:1}
\end{figure}

It is well known that to each invertible automaton $\mcb$ over a finite alphabet, one can associate a group $G(\mcb)=\langle \mcb_{q_1},\ldots,\mcb_{q_m}\rangle$, where $q_1,\ldots,q_m$ are states of the automaton and the operation is composition of automata \cite{GNS}.

Automaton groups are a class of groups that have been used to solve important problems.  The realization of $\mcl$ as $\mcl=\langle \mca_a,\mca_b\rangle$ gave much new information about the group $\mcl$ and showed that it possesses important self-similarity features.  In particular, it lead to the computation of the spectrum and spectral measure as was noticed earlier. This in turn was used in \cite{GLSZ} to answer one of Atiyah's questions on the existence of closed manifolds with non-integer $L^2$-Betti numbers.  In \cite{GZ01}, the authors started with the convolution operator $M_{\mu}$ corresponding to the element $m_{\mu}=a+a^{-1}+b+b^{-1}-\mu c\in\bbR[\mcl]$ and provided computations that eventually lead to the description of the spectrum and spectral measure of $M_{\mu}$ under the assumption that $\mu=0$.
The goal of this paper is to prove the following theorems.

\begin{theorem}\label{t1}
For $\mu\in\bbR$, let $M_{\mu}$ be defined as above.  For every $\mu\in\bbR$, the operator $M_{\mu}$ has pure point spectrum.  Moreover
\begin{itemize}
\item[(a)] If $|\mu|\leq1$, the eigenvalues of $M_{\mu}$ densely pack the interval $[-4-\mu,4-\mu]$.
\item[(b)]  If $|\mu|>1$, the eigenvalues of $M_{\mu}$ form a countable set that densely packs the interval $[-4-\mu,4-\mu]$ and also has an accumulation point $\mu+2/\mu\not\in[-4-\mu,4-\mu]$.
\end{itemize}
\end{theorem}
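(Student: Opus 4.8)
The plan is to diagonalise $M_{\mu}$ by Fourier analysis on the abelian base $L=\bigoplus_{\bbZ}\bbZ/2\bbZ$ of the semidirect product $\mcl=L\rtimes\bbZ$, which should turn $M_\mu$ into a direct integral of random Jacobi matrices on $\ell^2(\bbZ)$. Let $\Omega=\widehat L=\prod_{\bbZ}\bbZ/2\bbZ$ carry its Haar probability measure $\bbP$ (the Bernoulli$(1/2)$ measure), with coordinates $x=(x_m)_{m\in\bbZ}$, and set $V_m=(-1)^{x_m}\in\{\pm1\}$, which are i.i.d.\ under $\bbP$. In the Fourier picture $\lambda(c)$ becomes multiplication by $V_0$, while $\lambda(a)+\lambda(b)$ and $\lambda(a^{-1})+\lambda(b^{-1})$ become weighted shifts whose weights are among the $1+V_m\in\{0,2\}$. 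Decomposing $\ell^2(\mcl)\cong\int_\Omega^{\oplus}\ell^2(\bbZ)\,d\bbP$ along the orbits of the shift coupled to the active $\bbZ$--coordinate turns $M_\mu$ into $\int_\Omega^{\oplus}N_\omega\,d\bbP$, where $N_\omega$ is the Jacobi matrix with diagonal $(-\mu V_m)_{m}$ and nearest--neighbour weights $(1+V_m)_{m}$. Since $\bbP$--a.s.\ we have $1+V_m=0$ for infinitely many $m>0$ and infinitely many $m<0$ (Borel--Cantelli), $N_\omega$ splits a.s.\ into finite tridiagonal blocks, and the block delimited by two consecutive zero weights, of size $k$, is
\[
J_k=\begin{pmatrix}\mu&2&&&\\2&-\mu&2&&\\&2&-\mu&\ddots&\\&&\ddots&\ddots&2\\&&&2&-\mu\end{pmatrix}\in M_k(\bbR),
\]
with a single ``defect'' $\mu$ in one corner. (This slight asymmetry is forced by the unit index shift between the shift weights and the potential, and is precisely what will yield the accumulation point $\mu+2/\mu$ rather than $2\mu+2/\mu$.)

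Next I would work at the level of the Plancherel trace $\tau(T)=\langle T\delta_e,\delta_e\rangle$ on the group von Neumann algebra of $\mcl$. Under the identifications above $\delta_e$ becomes, on each fibre, the standard basis vector at the active coordinate $0$; and since the event that a prescribed site lies in a block of size $k$ at a prescribed position inside it has probability $2^{-(k+1)}$, one gets
\[
\tau\big(f(M_\mu)\big)=\sum_{k\geq1}2^{-(k+1)}\,\Tr f(J_k)
\]
for every bounded Borel $f$. Hence the spectral measure $\nu_{M_\mu}=\tau\circ E_{M_\mu}$ is purely atomic, supported on $\bigcup_{k\geq1}\spm(J_k)$ with mass $\sum_{k:\,\lambda\in\spm(J_k)}2^{-(k+1)}\dim\ker(J_k-\lambda)$ at $\lambda$. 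Because $\tau$ is a faithful finite trace, a point $\lambda$ is an eigenvalue of $M_\mu$ exactly when $\nu_{M_\mu}(\{\lambda\})>0$, and $\ell^2(\mcl)$ is the closure of the span of the eigenspaces of $M_\mu$ if and only if $\nu_{M_\mu}$ is purely atomic; so $M_\mu$ has pure point spectrum, and $\spm(M_\mu)=\overline{\bigcup_{k\geq1}\spm(J_k)}$ (the closure is genuinely attained on almost every fibre, since $\bbP$ charges every cylinder and so a.e.\ $\omega$ produces blocks of every size).

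It remains to analyse the finite spectra $\spm(J_k)$. Expanding the tridiagonal determinant and using the three--term recursion for the Chebyshev polynomials $U_k$ of the second kind, I expect
\[
\det(\lambda I-J_k)=(-2)^{k}\big(U_k(z)+\mu\,U_{k-1}(z)\big),\qquad z=-\tfrac{\mu+\lambda}{4},
\]
so the eigenvalues of $J_k$ are the numbers $\lambda=-\mu-4z$ with $U_k(z)+\mu U_{k-1}(z)=0$ (the advertised algebraic equations involving Chebyshev polynomials). For $z=\cos\phi\in(-1,1)$ this reads $\sin((k+1)\phi)+\mu\sin(k\phi)=0$, giving eigenvalues in $(-4-\mu,4-\mu)$; as $k\to\infty$ the solutions $\phi$ become $O(1/k)$--dense in $(0,\pi)$, so $\bigcup_k\spm(J_k)$ is dense in $[-4-\mu,4-\mu]$, which is also the spectrum of the unperturbed operator $-\mu I+2(\text{shift}+\text{shift}^{-1})$ on $\ell^2(\bbZ)$. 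For $|z|>1$, writing $z=\mp\cosh t$ and keeping the branch dictated by the sign of $\mu$, the equation becomes $\sinh((k+1)t)/\sinh(kt)=|\mu|$; the left side is strictly increasing in $t$ with limits $(k+1)/k$ at $0^{+}$ and $+\infty$, and strictly decreasing in $k$ with limit $e^{t}$, so there is a unique solution exactly when $|\mu|>(k+1)/k$, i.e.\ for all $k>1/(|\mu|-1)$ when $|\mu|>1$ and for no $k$ when $|\mu|\leq1$. The corresponding eigenvalues of $J_k$ lie outside $[-4-\mu,4-\mu]$, are pairwise distinct, and converge monotonically to $\mu+2/\mu$ because the solutions $t_k$ satisfy $t_k\to\log|\mu|$; moreover $\mu+2/\mu\notin[-4-\mu,4-\mu]$ when $|\mu|>1$, by the strict inequality $|\mu|+1/|\mu|>2$. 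Combined with the symmetry coming from the sign character of $\mcl$ that is trivial on $L$ and sends $a\mapsto-1$, which conjugates $M_\mu$ to $-M_{-\mu}$ (so $\spm(M_\mu)=-\spm(M_{-\mu})$ and the case $\mu<-1$ reduces to $\mu>1$), this yields (a) and (b).

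The step I expect to be the main obstacle is the reduction from $M_\mu$ to the explicit blocks $J_k$ --- the Fourier/orbit bookkeeping, including the exact position of the defect --- together with the analysis of the exterior eigenvalue: its existence and uniqueness for $k>1/(|\mu|-1)$, its distinctness across $k$, and the identification of its limit as exactly $\mu+2/\mu$. This is where the threshold $|\mu|=1$ and the bifurcation really enter; by contrast, once the blocks are in hand, the purely atomic Plancherel measure (hence pure point spectrum) and the density of the interior eigenvalues are comparatively routine.
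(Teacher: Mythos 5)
Your proposal is correct, but it reaches Theorem \ref{t1} by a genuinely different route than the paper. The paper works with the self-similar (automaton) realization of $\mcl$: it computes the level-$n$ determinants $\Phi_n(\lambda,\mu)$ via the recursion \eqref{star23}, factors them into the polynomials $G_k$ of \eqref{phiform}, identifies $G_k(\lambda,\mu)=2^kP_{k,\mu}(-\lambda/2)$ with the orthogonal polynomials of the one-defect Jacobi matrix $\mcj^*(\mu)$, and then invokes Proposition \ref{spectrum} (essential spectrum $[-2+\mu/2,2+\mu/2]$ plus the isolated point $-\mu/2-1/\mu$ exactly when $|\mu|>1$) together with the general zero-location facts of Section \ref{op}; pure-pointness and the weights come from the GZ01-style limit of counting measures. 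You instead run the Grabowski--Virag Fourier transform over $\hat{A}$ --- which the paper uses only afterwards, to deduce Theorem \ref{ids} from Theorems \ref{t1} and \ref{t2} --- in the opposite direction: almost-sure splitting of the random Jacobi fibre \eqref{anderson} into finite blocks $J_k$ (your block, with the single defect $\mu$ at one end, has characteristic polynomial equal to the paper's $G_k$ up to sign, and your weights $2^{-(k+1)}$ reproduce Theorem \ref{t2}), a faithful-trace/Plancherel argument for pure point spectrum, and a direct trigonometric/hyperbolic analysis of $U_k+\mu U_{k-1}$ for the bulk and the exterior eigenvalue. What each buys: your argument gives a clean, self-contained proof of pure-pointness (faithfulness of $\tau$ plus a.s. finite blocks) and yields Theorems \ref{t2} and \ref{ids} simultaneously, whereas the paper's finite-level approach additionally produces the eigenvalue multiplicities of $M_n(\mu)$, the real joint spectrum of Theorem \ref{t3}, and a version of Proposition \ref{spectrum} valid for complex $\mu$. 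Two small points to nail down in a full write-up: the monotonicity you use is easiest seen from $\sinh((k+1)t)/\sinh(kt)=\cosh t+\sinh t\coth(kt)$, which is strictly increasing in $t>0$ and strictly decreasing in $k$ with limit $e^t$; and at the threshold $|\mu|=(k+1)/k$ the root sits exactly at the endpoint $4-\mu$ (compare \eqref{mucut}), which affects no claim of Theorem \ref{t1}.
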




When $\mu\in\bbR$, the operator $M_{\mu}$ is self-adjoint, the spectral decomposition $M_{\mu}=\int_{\bbR}\lambda dE_{\mu}(\lambda)$ holds, and one can define a spectral measure $\nu_{\mu}$ as
\[
\nu_{\mu}(B)=\langle E_{\mu}(B)\delta_1,\delta_1\rangle,
\]
where $\{E_{\mu}(B):B \,\mbox{is real Borel}\}$ is a spectral family of projections and $\delta_1$ is a delta function at $1\in\mcl$.  We can also provide a detailed description of the spectral measure $\nu_{\mu}$.

\begin{theorem}\label{t2}
The spectral measure $\nu_{\mu}$ of the operator $M_{\mu}$ is given by
\[
\nu_{\mu}=\frac{1}{4}\delta_{\mu}+\sum_{k=2}^{\infty}\left[\frac{1}{2^{k+1}}\sum_{\{s:G_{k}(s,\mu)=0\}}\delta_s\right],
\]
where
\[
G_{k}(z,\mu)=2^k\left[U_k\left(\frac{-z-\mu}{4}\right)+\mu U_{k-1}\left(\frac{-z-\mu}{4}\right)\right],
\]
$U_k$ is the degree $k$ Chebyshev polynomial of the second kind, and zeros of $G_k$ in the above sum are counted with multiplicities.
\end{theorem}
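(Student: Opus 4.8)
The plan is to diagonalize the abelian normal subgroup $N=\bigoplus_{\bbZ}\bbZ/2\bbZ$ of $\mcl=N\rtimes\bbZ$ by Fourier analysis, rewriting $M_{\mu}$ as a direct integral of random Jacobi (weighted path Schr\"odinger) operators on $\ell^2(\bbZ)$, and then to obtain $\nu_{\mu}$ by averaging the spectral measures of the finite blocks into which these operators decompose.

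In the semidirect-product coordinates $(f,n)$ with $f\in N$, $n\in\bbZ$, we have $a=(0,1)$ and $c=(\delta_0,0)$, hence $b=ac$. The Pontryagin dual of $N$ is $\Omega=\{0,1\}^{\bbZ}$ with the product Bernoulli$(1/2)$ measure $\bbP$, and Plancherel on $N$ (applied fibrewise in $n$) gives a unitary $\ell^2(\mcl)\cong L^2(\Omega,\ell^2(\bbZ))$ under which right translation by $a$ becomes the (constant in $\omega$) bilateral shift $S$ on $\ell^2(\bbZ)$ and right translation by $c$ becomes the diagonal field $D_{\omega}=\operatorname{diag}\bigl((-1)^{\omega_n}\bigr)_{n\in\bbZ}$. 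Since $M_{\mu}$ is a noncommutative polynomial in $a^{\pm1},c$, it becomes the decomposable operator $\int^{\oplus}_{\Omega}H_{\omega}\,d\bbP(\omega)$ with
\[
H_{\omega}=S+S^{-1}+SD_{\omega}+D_{\omega}S^{-1}-\mu D_{\omega},\qquad
(H_{\omega}\psi)_n=\bigl(1+(-1)^{\omega_{n+1}}\bigr)\psi_{n+1}+\bigl(1+(-1)^{\omega_n}\bigr)\psi_{n-1}-\mu(-1)^{\omega_n}\psi_n .
\]
Thus $H_{\omega}$ is a Schr\"odinger operator on the path $\bbZ$ in which the bond $\{n,n+1\}$ has weight $2$ when $\omega_{n+1}=0$ and is absent when $\omega_{n+1}=1$, while site $n$ carries potential $-\mu(-1)^{\omega_n}$. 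Because $\{n:\omega_n=1\}$ is $\bbP$-a.s.\ unbounded above and below, for a.e.\ $\omega$ the operator $H_{\omega}$ is an orthogonal direct sum over the finite maximal runs delimited by the absent bonds, a run of length $k$ contributing the $k\times k$ tridiagonal matrix $B_k$ with off-diagonal entries $2$, first diagonal entry $\mu$, and the remaining diagonal entries $-\mu$.

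Next I would compute the characteristic polynomial of $B_k$ by a one-line determinant recursion, comparing $B_k$ with the matrix having constant diagonal $-\mu$ (whose characteristic polynomial is $2^kU_k\!\left(\tfrac{z+\mu}{4}\right)$): this yields
\[
\det(zI-B_k)=2^k\Bigl(U_k\!\left(\tfrac{z+\mu}{4}\right)-\mu\,U_{k-1}\!\left(\tfrac{z+\mu}{4}\right)\Bigr)=(-1)^k\,G_k(z,\mu),
\]
the last equality from $U_j(-x)=(-1)^jU_j(x)$. Hence the eigenvalues of $B_k$ are exactly the zeros of $G_k(\cdot,\mu)$; moreover $B_k$ has simple spectrum (its off-diagonal entries are nonzero), so its $k$ zeros are distinct, its normalized eigenvectors $\{v_z\}$ form an orthonormal basis of $\bbC^k$, and $\sum_{z:\,G_k(z,\mu)=0}|\langle v_z,e_i\rangle|^2=1$ for every coordinate vector $e_i$.

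Finally, under the identification above the basis vector $\delta_1\in\ell^2(\mcl)$ (with $1=(0,0)$) corresponds to the constant section $\omega\mapsto e_0\in\ell^2(\bbZ)$, so $\nu_{\mu}=\int_{\Omega}\mu_{\omega}\,d\bbP(\omega)$, where $\mu_{\omega}$ is the spectral measure of $H_{\omega}$ at $e_0$; since $e_0$ lies in the single block containing the site $0$, $\mu_{\omega}$ is the spectral measure of that finite block at the appropriate coordinate vector. Conditioning on this block, the event ``site $0$ is the $i$-th element of a run of length $k$'' is determined by $k+1$ independent coordinates of $\omega$, hence has probability $2^{-(k+1)}$, for each $k\ge1$ and $1\le i\le k$; and summing the block spectral measures over $i=1,\dots,k$ collapses, by the orthonormal-basis identity above, to $\sum_{z:\,G_k(z,\mu)=0}\delta_z$. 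Therefore $\nu_{\mu}=\sum_{k\ge1}2^{-(k+1)}\sum_{z:\,G_k(z,\mu)=0}\delta_z$; isolating the $k=1$ term, where $G_1(z,\mu)=\mu-z$, produces $\tfrac14\delta_{\mu}$ and gives the asserted formula (total mass $\sum_{k\ge1}k\,2^{-(k+1)}=1=\|\delta_1\|^2$ as a consistency check). The substantive parts are the Fourier reduction of the second paragraph---getting the fibre operators and their matrix entries exactly right, including the asymmetric endpoint potential in $B_k$---and the bookkeeping of the last paragraph, which must account for each $\omega$ exactly once with the correct weight; the determinant identity and the Chebyshev manipulations are routine.
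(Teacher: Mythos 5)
Your argument is correct, but it proves the theorem by a genuinely different route than the paper does. The paper's proof is two lines: it invokes the identification (from Section 7 of \cite{GZ01}, resting on the KNS-measure/Kesten-measure coincidence discussed in Section \ref{prelim}) of $\nu_{\mu}$ with the weak-$*$ limit of the normalized eigenvalue counting measures of the level-$n$ matrices $M_n(\mu)$, and then reads the weights $2^{-(k+1)}$ directly off the factorization \eqref{phiform} of $\Phi_n$. You instead bypass the finite-level approximation entirely: you Fourier-transform along the abelian base to write $M_{\mu}$ as a direct integral of random Jacobi operators $H_{\omega}$, use that a.s.\ the vanishing bonds cut $\bbZ$ into finite runs, and average the block spectral measures, with the event bookkeeping giving exactly the weight $2^{-(k+1)}$ per run length $k$ and per position of the origin. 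I checked the delicate points: your fibre operator agrees with the paper's \eqref{anderson} up to an immaterial index shift/reflection (which preserves the Bernoulli measure and fixes $e_0$, so the averaged measure is unchanged); your block $B_k$, with the single flipped diagonal entry at the end adjacent to the deleted bond, does have $\det(zI-B_k)=(-1)^kG_k(z,\mu)$ with $G_k$ as in the theorem statement (note this matches the formula $G_k=2^k[U_k+\mu U_{k-1}]$ and the recursion \eqref{grec}, though not the misprinted coefficient $2^{k+1}$ in \eqref{gform}); the events indexed by $(k,i)$ partition a full-measure set with total mass $\sum_k k\,2^{-(k+1)}=1$; and simplicity of the eigenvalues of $B_k$ justifies the collapse $\sum_i|\langle v_z,e_i\rangle|^2=1$. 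What each approach buys: the paper's argument is very short given the machinery it already has in place (the determinant factorization and the GZ01 trace argument identifying $\nu_\mu$ with the limit of counting measures), while yours is essentially self-contained, yields simplicity of the zeros and the exact point masses without any limiting procedure, and proves Theorem \ref{ids} simultaneously --- in effect reversing the paper's logical order, since the paper derives the random-operator statements as corollaries of Theorems \ref{t1} and \ref{t2} whereas you derive Theorem \ref{t2} from the random-operator picture of Section \ref{schro}.
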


Our calculation of the spectral measure $\nu_{\mu}$ will proceed by reducing it to a problem of finding zeros of polynomials $G_k(z,\mu)$ when $\mu$ is fixed and counting their multiplicities.  The case $\mu=0$ is relatively easy and was investigated in \cite{GZ01}.  To deal with the general case, we will explore the theory of orthogonal polynomials.

Surprisingly, there is a connection between operators associated with groups and random Schr\"{o}dinger operators.  The first link between them was discovered by L. Grabowski and B. Virag in \cite{GV} (see also \cite{G15,G16,KV17}) and the involved group is the lamplighter that we consider here.  Another example is the paper of D. Lenz, T. Nagnibeda, and the first author \cite{GLN18}.

The method of Grabowski and Virag (which goes back to \cite{G14} and T. Austin in \cite{Austin13}) associates to a convolution operator $T\in\ell^2(G)$, where $G=A\rtimes \Gamma$ is a semidirect product of an abelian group $A$ and a countable group $\Gamma$ acting on $A$ by automorphisms, a random family $\{H_{\omega}\}_{\omega\in\Omega}$ of operators in $\ell^2(\Gamma_{\omega})$ where $\Omega=\hat{A}$ is the Pontryagin dual of $A$ supplied with normalized Haar measure.  In some cases (when $\Gamma\simeq\bbZ$, $A=\bigoplus_{\bbZ}\bbZ/n\bbZ$, and specially chosen elements of a group algebra $\bbC[G]$) the family $\{H_{\omega}\}_{\omega\in\Omega}$ becomes a true random Schr\"{o}dinger operator.  The remarkable fact observed in \cite{G14} is that the spectral measure of the operator $T$ associated with the delta function $\delta_1$ at the identity coincides with the density of states of $H_{\omega}$  In \cite{GV} this fact was used to show that some convolution operators on $\mcl$ have continuous singular spectral measure.  This is the first such example in abstract harmonic analysis on discrete groups.

Using the above approach we produce interesting examples for the theory of random Jacobi-Schr\"{o}dinger operators in the Anderson model (i.e. when $(\Omega,\nu)$ is a Bernoulli system).  Let $\Omega=\{0,1\}^{\bbZ}$, $\nu$ be a uniform Bernoulli measure on $\Omega$ (i.e. $\nu=\{1/2,1/2\}^{\bbZ}$), $f$ and $g$ functions defined as
\[
f(\omega)=1+(-1)^{\omega_{-1}},\qquad\qquad g(\omega)=\mu(-1)^{\omega_0+1},
\]
where $\omega_n$ denotes the $n^{th}$ entry of $\omega\in\Omega$.  Let $\{H_{\omega}\}_{\omega\in\Omega}$ be a random family of operators in $\ell^2(\bbZ)$ given by
\begin{equation}\label{anderson}
(H_{\omega}u)(n)=f(T^n\omega)u(n-1)+g(T^n\omega)u(n)+f(T^{n+1}\omega)u(n+1)
\end{equation}
Then as a corollary of Theorems \ref{t1} and \ref{t2} we get the following result.

\begin{theorem}\label{ids}
\begin{itemize}
\item[a)]  The random operator $H_{\mu,\omega}$ almost surely has pure point spectrum and as a set, the spectrum of $H_{\mu,\omega}$ is almost surely given by the set described in Theorem \ref{t1}.
\item[b)]  The density of states of $H_{\mu,\omega}$ is discrete and coincides with the spectral measure described in Theorem \ref{t2}.
\end{itemize}
\end{theorem}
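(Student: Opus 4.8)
The plan is to derive Theorem~\ref{ids} from Theorems~\ref{t1} and~\ref{t2} by means of the reduction of $M_\mu$ to the random family $\{H_{\mu,\omega}\}$ described above, i.e.\ the Grabowski--Virag/Austin direct integral construction. First I would make that reduction precise: writing $\mcl=A\rtimes\bbZ$ with $A=\bigoplus_{\bbZ}\bbZ/2\bbZ$ and performing the Fourier transform on $A$ (so that $\widehat A$ is identified with $\Omega$ equipped with normalized Haar measure $\nu$, which is exactly the uniform Bernoulli measure), one gets a unitary $\ell^2(\mcl)\cong L^2(\Omega,\nu)\otimes\ell^2(\bbZ)=\int^\oplus_\Omega\ell^2(\bbZ)\,d\nu(\omega)$ intertwining $M_\mu$ with the decomposable operator $\int^\oplus_\Omega H_{\mu,\omega}\,d\nu(\omega)$, where $H_{\mu,\omega}$ is precisely the operator in \eqref{anderson}, and carrying $\delta_1\in\ell^2(\mcl)$ to the constant section $\omega\mapsto\delta_0$. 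The exact shape of $f$ and $g$ is read off from $m_\mu=a+a^{-1}+b+b^{-1}-\mu c$ and the relations $c=e_0$, $b=ac$, $b^{-1}=ca^{-1}$: the element $c$ becomes multiplication by $\omega\mapsto(-1)^{\omega_0}$, which yields the diagonal term $g(\omega)=\mu(-1)^{\omega_0+1}$ coming from $-\mu c$, while $a,a^{-1}$ become shifts on $\ell^2(\bbZ)$ twisted by the shift $T$ on $\Omega$; grouping the pairs $a+ac$ and $a^{-1}+ca^{-1}$ and commuting $T$ past the multiplication by $(-1)^{\omega_0}$ gives the off-diagonal coefficient $1+(-1)^{\omega_{-1}}=f(\omega)$. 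I would also record that $\{H_{\mu,\omega}\}$ is an ergodic family: $H_{\mu,T\omega}=SH_{\mu,\omega}S^{-1}$ for the shift $S$ on $\ell^2(\bbZ)$, the base $(\Omega,\nu,T)$ is a Bernoulli shift and hence ergodic, and $\|H_{\mu,\omega}\|\le 4+|\mu|$ so these are bounded self-adjoint operators.

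For part~(a), the crucial observation is that the hopping coefficient $f(T^{n+1}\omega)=1+(-1)^{\omega_n}\in\{0,2\}$ vanishes exactly when $\omega_n=1$. Since $\nu$ is Bernoulli$(1/2)$, for $\nu$-a.e.\ $\omega$ the set $\{n:\omega_n=1\}$ is unbounded above and below, so $H_{\mu,\omega}$ is almost surely the orthogonal direct sum of the finite tridiagonal blocks it induces on the maximal runs of sites between consecutive such indices; a self-adjoint operator that is a direct sum of finite matrices has pure point spectrum, so $H_{\mu,\omega}$ has pure point spectrum a.s. For the spectrum as a set I would combine the standard fact that, by covariance and ergodicity, $\mathrm{sp}(H_{\mu,\omega})$ equals a non-random closed set $\Sigma_\mu$ for $\nu$-a.e.\ $\omega$ with the identity that the spectrum of $\int^\oplus_\Omega H_{\mu,\omega}\,d\nu(\omega)$ is the $\nu$-essential union of the sets $\mathrm{sp}(H_{\mu,\omega})$; together these give $\Sigma_\mu=\mathrm{sp}(M_\mu)$, which Theorem~\ref{t1} identifies with the claimed set. (Equivalently and more explicitly: a.e.\ $\omega$ realizes every finite $0$/$1$ pattern, so $\mathrm{sp}(H_{\mu,\omega})$ is a.s.\ the closure of $\bigcup_{k\ge1}\{z:G_k(z,\mu)=0\}$, since the characteristic polynomial of the size-$k$ block is, up to a nonzero scalar and the reflection $z\mapsto -z-\mu$, the polynomial $G_k(\cdot,\mu)$ of Theorem~\ref{t2}.)

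For part~(b), I would identify the density of states with $\nu_\mu$. By definition --- and, by the ergodic theorem, also equal to the almost sure limit of the normalized eigenvalue-counting measures of $H_{\mu,\omega}$ restricted to finite boxes --- the density of states $k_\mu$ of $\{H_{\mu,\omega}\}$ is $k_\mu(B)=\int_\Omega\langle\delta_0,E_{H_{\mu,\omega}}(B)\delta_0\rangle\,d\nu(\omega)$ for Borel $B$. Under the direct integral decomposition, $E_\mu(B)$ corresponds to $\int^\oplus_\Omega E_{H_{\mu,\omega}}(B)\,d\nu(\omega)$ and $\delta_1$ to the constant section $\omega\mapsto\delta_0$, so the right-hand side is $\langle\delta_1,E_\mu(B)\delta_1\rangle=\nu_\mu(B)$; this is exactly the Grabowski--Virag/Austin identity recalled in the introduction. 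Hence $k_\mu=\nu_\mu$, which by Theorem~\ref{t2} is purely atomic --- its atoms being the zeros of the polynomials $G_k(\cdot,\mu)$ with the weights listed there --- so the density of states is discrete, proving~(b).

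The parts I expect to require the most care form no single deep step, but rather: (i) pinning down the precise correspondence between $M_\mu$ and the family of operators~\eqref{anderson}, i.e.\ the signs, index shifts, and the order in which the twist by $e_0$ and the shift $T$ are composed, so that the coefficients come out to be exactly $f(\omega)=1+(-1)^{\omega_{-1}}$ and $g(\omega)=\mu(-1)^{\omega_0+1}$; and (ii) the measure-theoretic bookkeeping behind the almost sure statements --- a Borel--Cantelli argument furnishing a single full-measure set of $\omega$ that simultaneously has infinitely many cut sites in both directions and realizes every finite run length, so that the non-random spectrum and density of states really do coincide with the set of Theorem~\ref{t1} and the measure of Theorem~\ref{t2}. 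I would also flag the conceptual point that the almost sure pure point spectrum of $H_{\mu,\omega}$ does \emph{not} follow formally from $M_\mu$ having pure point spectrum --- averaging pure point measures can produce a continuous measure, as the continuity of the integrated density of states in the usual Anderson model shows --- so the decoupling at the sites with $\omega_n=1$ is genuinely what drives part~(a).
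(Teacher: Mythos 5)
Your proposal follows essentially the same route as the paper: the Fourier/Pontryagin direct-integral reduction of $M_\mu$ to the family $\{H_{\mu,\omega}\}$ with the stated $f$ and $g$, the Grabowski--Virag/Austin identity equating the density of states with the spectral measure $\nu_\mu$ at $\delta_1$, and the observation that the a.s.\ vanishing of the hopping term at infinitely many sites in both directions gives a block-diagonal structure with finite blocks, hence pure point spectrum, with Theorems \ref{t1} and \ref{t2} then identifying the spectrum and the density of states. The paper states this argument only in outline in Section \ref{schro}; your write-up supplies the details (covariance/ergodicity for the non-random spectrum, the identification of the block characteristic polynomials with the $G_k$) and is correct.
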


It is known that the density of states in the classical Anderson model is a continuous measure.  We see that for Jacobi-type operators this is no longer true.  Also, Theorem \ref{t2} gives us the exact description of the density of states.  We will discuss this more in Section \ref{schro}.

The final section discusses further topics related to the results of this paper, in particular the Novikov-Shubin invariants.  The last example at the very end of the text shows that $\mcl$ has operators of convolution given by the elements of the group algebra with rational (or even integer) coefficients and irrational Novikov-Shubin invariants.  The first such examples were given in \cite{G16}.

\section{Preliminaries}\label{prelim}

In this section we will introduce some relevant information about orthogonal polynomials and their relationship to the spectral theory of Jacobi matrices.  In particular we will be interested in understanding the location of the zeros of orthogonal polynomials in terms of the spectrum of the corresponding Jacobi matrix.  Also we will provide a short background to the spectral theory of groups acting on rooted trees.

\subsection{Orthogonal Polynomials on the Real Line}\label{op}

Consider a probability measure $\gamma$ with compact and infinite support in the real line.  By performing Gram-Schmidt orthogonalization on the sequence of monomials, one arrives at the sequence of orthonormal polynomials $\{\varphi_n(x)\}_{n=0}^{\infty}$, where the degree of $\varphi_n$ is exactly $n$ and the leading coefficient of $\varphi_n$ is positive.  By dividing each polynomial by its leading coefficient, we obtain the sequence $\{p_n(x)\}_{n=0}^{\infty}$ of monic polynomials that are still mutually orthogonal in the space $L^2(\gamma)$.  The most basic facts about the zeros of $p_n$ are (see \cite[Section 1.2.5]{OPUC1}):
\begin{itemize}
\item  The zeros of $p_n$ are all real and simple.
\item  Between any two zeros of $p_{n+1}$ there is a zero of $p_n$.
\end{itemize}

To get more detailed information about the zeros of $p_n$, we need to employ ideas from the spectral theory of bounded self-adjoint operators.  The polynomials $\{p_n\}_{n=0}^{\infty}$ satisfy a three-term recurrence relation
\begin{equation}\label{3term}
p_{n+1}(x)=(x-b_{n+1})p_n(x)-a_n^2p_{n-1}(x),\qquad\qquad n=0,1,2,\ldots,
\end{equation}
where $p_{-1}=0$ by convention.  In \eqref{3term}, $b_n\in\bbR$ and $a_n>0$ for each $n\in\bbN$.  The orthonormal polynomials also satisfy a three-term recurrence relation, which is most easily expressed in the following formal matrix notation:
\begin{equation}\label{jac1}
\begin{pmatrix}
b_1 & a_1 & 0 & 0 & \cdots\\
a_1 & b_2 & a_2 & 0 & \cdots\\
0 & a_2 & b_3 & a_3 & \cdots\\
0 & 0 & a_3 & b_4 & \ddots\\
\vdots & \vdots & \vdots & \ddots & \ddots
\end{pmatrix}
\begin{pmatrix}
\varphi_0(x)\\
\varphi_1(x)\\
\varphi_2(x)\\
\varphi_3(x)\\
\vdots
\end{pmatrix}=
\begin{pmatrix}
x\varphi_0(x)\\
x\varphi_1(x)\\
x\varphi_2(x)\\
x\varphi_3(x)\\
\vdots
\end{pmatrix}
\end{equation}
The tri-diagonal self-adjoint matrix on the far left of \eqref{jac1} is the Jacobi matrix $\mcj$ that defines a self-adjoint operator on $\ell^2(\bbN)$, which we also denote by $\mcj$.  The measure $\gamma$ is the spectral measure for this matrix and the vector $(1,0,0,\ldots)^T$.  
Consequently, the support of $\gamma$ is the spectrum of $\mcj$.  This relationship allows us to connect the zeros of $p_n$ to the spectrum of $\mcj$.  Indeed, the following facts can be found in \cite[Section 1.2.11]{OPUC1}:
\begin{itemize}
\item  If $y\in\supp(\gamma)$ and $\epsilon>0$, then for all sufficiently large $n$ the polynomial $p_{n}$ has a zero within $\epsilon$ of $y$
\item  If $I$ is an interval that is disjoint from $\supp(\gamma)$, then $p_{n}$ has at most one zero in $I$.
\end{itemize}
These two facts tell us that when $n$ is very large, the collection of zeros of $p_n$ very closely resembles the support of the measure $\gamma$.

The above discussion tells us that as $\nri$, the number of sign changes of $p_n$ on the interval $(x,\infty)$ approaches the cardinality of $\supp(\gamma)\cap(x,\infty)$.  Thus, if $x$ is chosen so that $\supp(\gamma)\cap(x,\infty)$ has finite cardinality $N$, then for all very large $n$, the polynomial $p_n$ will have exactly $N$ zeros in the interval $(x,\infty)$.  Furthermore, by the interlacing property of the zeros of orthogonal polynomials, the limit points of these $N$ zeros (as $\nri$) are precisely the $N$ points in $\supp(\gamma)\cap(x,\infty)$ and the approach to these points is monotonic.

One very well understood example is the case when $a_n\equiv1$ and $b_n\equiv0$.  The Jacobi matrix in this case is called the free Jacobi matrix and can be written as $\mathfrak{L}+\mathfrak{R}$, where $\mathfrak{L}$ is the left shift operator on $\ell^2(\bbN)$ and $\mathfrak{R}$ is the right shift operator on $\ell^2(\bbN)$.  In this case, the spectrum of the Jacobi matrix is $[-2,2]$ and the spectral measure $\gamma$ is purely absolutely continuous on that interval with weight $\frac{1}{2\pi}\sqrt{4-x^2}$.  The corresponding orthonormal polynomials are rescaled Chebyshev polynomials of the second kind, namely $\{U_n(x/2)\}_{n=0}^{\infty}$, where $U_n$ is the $n^{th}$ Chebyshev polynomial.  The polynomials $\{U_n\}_{n=0}^{\infty}$ have the following properties, which we will use later:
\begin{align}
\label{urecur}U_{n+1}(x)&=2xU_n(x)-U_{n-1}(x)\\
\label{utrig}U_n(\cos(x))&=\frac{\sin((n+1)x)}{\sin(x)}\\
\label{uone}U_n(1)&=n+1\\
\label{uratio}\lim_{\nri}\frac{U_n(x)}{U_{n+1}(x)}&=\frac{1}{x+\sqrt{x^2-1}},\qquad\qquad x\not\in[-1,1]\\
\label{first} U_0(x)=1, \quad & \quad U_1(x)=2x 
\end{align}
where the square root is defined with the branch cut along $[-1,1]$ and so that $\sqrt{x^2-1}>0$ when $x>1$.

\subsection{Matrix Recursions and the Limit Spectral Measures}

A group $G$ defined by an automaton over an alphabet of $d$ letters naturally acts by automorphisms of a $d$-regular rooted tree $T=T_d$ (see \cite{Gr11,GNS}).  The self-similarity structure given by the automaton realization of $G$ leads to self-similarity properties of involved Schreier graphs $\{\Gamma_n\}_{n\in\bbN}$ and $\{\Gamma_{\xi}\}_{\xi\in\partial T}$ associated with the action on the $n^{th}$ level of the tree and the orbit $G\xi$.  If a group $G$ with a generating set $S$ acts transitively on a set $X$, then the Schreier graph (also called the graph of action) consists of the set of vertices $V=X$ and the set of edges $\{(x,sx):x\in X, \, s\in S\}$.

Another set of important operators involved in this study are those of the form $\pi(m)$, where $m\in\bbC[G]$ is an element of the group algebra and $\pi:G\rightarrow\mcu(L^2(\partial T,\gamma))$ is a Koopman representation given by
\[
\pi f(x)=f(g^{-1}x),\qquad\qquad f\in L^2(\partial T,\gamma)
\]
In general, spectra of Schreier graphs $\Gamma_{\xi}$ are contained in the spectrum of $\pi(m)$, where
\begin{equation}\label{mdef}
m=\frac{1}{2|\mcs|}\sum_{s\in\mcs\cup\mcs^{-1}}s
\end{equation}
but in the case when the graphs $\Gamma_{\xi}$ are amenable (and this holds for instance if $G$ is amenable), the relations $\spm(\Gamma_{\xi})=\spm(\pi(m))$ hold (see \cite{BG00}).  Moreover,
\[
\spm(\pi(m))=\overline{\bigcup_{n=0}^{\infty}\spm(\Gamma_n)}
\]
and $\spm(\Gamma_n)\subset\spm(\Gamma_{n+1})$ as $\Gamma_{n+1}$ covers $\Gamma_n$ for each $n$.

There are several examples of computations of the spectra $\spm(\Gamma_n)$, the spectral counting measure $\sigma_n$, and of the limit spectral measures $\sigma_*:=\lim_{\nri}\sigma_n$ provided in \cite{BG00,GZ01} and other articles.  It is known from \cite{GZ04} that this limit exists and it is called there the KNS-spectral measure.  This measure could also be called the density of states because of the similarity of its definition to the classical notion of density of states used in the theory of random Schr\"{o}dinger operators (see \cite{IDS}).

The study of spectra of graphs $\Gamma_n$ and $\Gamma_{\xi}$ are closely related.  By the spectrum of a graph, we mean first of all the spectrum of the adjacency matrix.  The spectrum of $\Gamma$is denoted by $\spm(\Gamma)$.  A more general point of view of the spectrum involves use of weights along the edges of the graph (e.g. anisotropic case).

The meaning of $\sigma_*$ can be understood as follows.  For a graph $\Gamma=(V,E)$ and vertex $v\in V$, let $\sigma_v$ be the spectral measure given by $\sigma_v(B)=\langle E(B)\delta_v,\delta_v\rangle$, where $\delta_v$ is the delta mass at $v$, $B$ is a Borel subset of $\bbR$, and $E(B)$ is a spectral family given by the spectral theorem for $M$.  For a simple random walk on $\Gamma$ that starts at $v$ and with equal transition probabilities along edges, the probability $P_{v,v}^{(n)}$ of return after $n$ steps is the $n^{th}$ moment of the measure $\sigma_v$.  Now apply this reasoning to the family $\{\Gamma_{\xi}\}_{\xi\in\partial T}$ with $\xi$ taken as the initial point of the random walk and take the averages
\[
\tilde{P}^{(n)}=\int P_{\xi,\xi}^{(n)}d\gamma(\xi)
\]
Then $\tilde{P}^{(n)}$ is the $n^{th}$ moment of the measure $\sigma_{*}$ \cite{Gr11}.  Thus if the action $G\curvearrowright \partial T$ on the boundary of the tree is essentially free, then the graphs $\{\Gamma_{\xi}\}_{\xi\in\partial T}$  are almost surely isomorphic to the Cayley graphs $\Gamma(G,S)$ and so the probabilities of return do not depend on the starting vertex (because the Cayley graphs have a transitive group of automorphisms), the probabilities $\{P_{\xi,\xi}^{(n)}\}_{\xi\in\partial T}$ are constant almost surely and coincide with the averaged probability $\tilde{P}^{(n)}$.  Hence the KNS-spectral measure $\sigma_*$ coincides in this case with the Kesten's spectral measure associated with the random walk on $G$.  The paper \cite{GZ01} gave another justification of this fact based on the use of the $C^*$-algebra generated by the Koopman representation and the recurrent trace defined in \cite{GZ01}.  A clearer form of this result is provided in \cite{Gr11}.

The above arguments in fact work not only for a simple random walk (and hence for the operator $\lambda_G(m)$, with $m$ as in \eqref{mdef}), but for operators given by any self-adjoint elements of the group algebra $\bbC[G]$.

\section{Recursive relations and determinants}\label{spec}

Let us return our attention to the lamplighter group $\mcl=\langle a,b\rangle$ and its realization by the automaton $\mca$ from Figure $1$.  The group acts by automorphisms of a rooted binary tree (the action is determined by the automaton structure).  Our approach to the computation of the relevant spectrum will follow that used in \cite[Section 6]{GZ01}, so let us review the set-up presented there.

Let $T=T_2$ be a binary rooted tree and let $\eta$ denote the uniform Bernoulli measure on the boundary of $T$, $\partial T=\{0,1\}^{\bbN}$.  Let $\mch$ denote the Hilbert space $L^2(\partial T,\eta)$.  The left and right branches $T_0$ and $T_1$ of $T$ are canonically isomorphic to the whole tree $T$ and the restrictions of $\eta$ to each $T_i$ ($i=0,1$) are - after appropriate normalization - equal to $\eta$.  This leads to the self-similarity  $\mch\cong\mch\oplus\mch$ of the Hilbert space $\mch$ and to the operator recursions
\[
\pi(a)=\begin{pmatrix} 0 & \pi(a)\\ \pi(b) & 0\end{pmatrix},\qquad\qquad\pi(b)=\begin{pmatrix}\pi(a) & 0\\ 0 & \pi(b)\end{pmatrix}
\]
where $\pi$ is the Koopman representation (for more on self-similar $C^*$-algebras and operator recursions, see \cite{GN07}).  The operator $\pi(c)$ corresponding to the element $c=b^{-1}a\in\mcl$, which has order $2$, is presented by
\[
\pi(c)=\begin{pmatrix} 0 & I\\ I & 0\end{pmatrix}
\]
where $I$ is the identity operator (because the automorphism $c$ just permutes the vertices of the first level and hence switches  $T_0$ and $T_1$ without further action inside $T_0$ or $T_1$).

Similarly, let $V_n$ be the set of $2^n$ vertices of the $n^{th}$ level of $T$.  The matrices of size $2^n\times2^n$ $a_n$, $b_n$, and $c_n$ presenting generators $a$, $b$, and $c$ by their actions on the space $\ell^2(V_n)$ satisfy the recurrent relations
\[
a_n=\begin{pmatrix} 0 & a_{n-1}\\ b_{n-1} & 0\end{pmatrix}\qquad b_n=\begin{pmatrix}a_{n-1} & 0\\ 0 & b_{n-1}\end{pmatrix}\qquad c_n=\begin{pmatrix}0 & I_{2^{n-1}}\\ I_{2^{n-1}} & 0\end{pmatrix}
\]
The sum $a_n+a_n^{-1}+b_n+b_{n}^{-1}$ is the adjacency matrix of the Schreier graph $\Gamma_n$ when the system of generators $\{a,b\}$ for $\mcl$ is used.  Similarly, the sum $a_n+a_n^{-1}+b_n+b_n^{-1}+c_n$ is the adjacency matrix when the system of generators $\{a,b,c\}$ for $\mcl$ is used.  We include the latter sum in the one-parameter pencil
\[
M_n(\mu)=a_n+a_n^{-1}+b_n+b_n^{-1}-\mu c_n,\qquad\qquad\mu\in\bbC
\]
of matrices of size $2^n$.  We will now see that one can relate the problem of finding the eigenvalues (and their multiplicities) of $M_n(\mu)$ to solving certain equations involving Chebyshev polynomials of the second kind.

Following the notation in \cite{GZ01}, set
\[
S_{n+1}=\begin{pmatrix}
0 & I_{2^n}\\
I_{2^n} & 0
\end{pmatrix}
\]
and let us consider the calculation of
\[
\Phi_n(\lambda,\mu):=\det\left(a_n+a_n^{-1}+b_n+b_n^{-1}-\mu S_n-\lambda I_{2^n}\right).
\]
From this we find
\begin{align*}
\Phi_0(\lambda,\mu)&=(4-\lambda-\mu)\\
\Phi_1(\lambda,\mu)&=(\mu-\lambda)(4-\lambda-\mu)\\
\Phi_2(\lambda,\mu)&=(\mu-\lambda)(4-\lambda-\mu)(\lambda^2-\mu^2-4)\\
\Phi_3(\lambda,\mu)&=(\lambda-\mu)^2(\lambda+\mu-4)(\lambda^2-\mu^2-4)(\lambda^3+\lambda^2\mu-\lambda\mu^2-\mu^3-8\lambda)
\end{align*}
Observe that $\Phi_n$ is a factor of $\Phi_{n+1}$ since the graph $\Gamma_{n+1}$ covers $\Gamma_n$.
The first step in this calculation will involve writing $\Phi_n(\lambda,\mu)$ as a product of many factors of smaller degree.  In \cite[Section 6]{GZ01} it is shown that
\begin{equation}\label{star23}
\Phi_n(\lambda,\mu)=(\mu-\lambda)^{2^n}\Phi_{n-1}(\lambda',\mu'),\qquad n\geq1,
\end{equation}
where
\[
\lambda'=\frac{\lambda^2-\mu^2-2}{\lambda-\mu},\qquad\qquad\mu'=\frac{2}{\lambda-\mu}
\]
and hence one can establish a recursive relationship for the functions $\{\Phi_n(\lambda,\mu)\}_{n\in\bbN}$.  Indeed, one has
\[
\Phi_n(\lambda,\mu)=(\mu-\lambda)^{2^{n-1}+\cdots+2+1}\Phi_0(F^{(n)}(\lambda,\mu))
\]
where $F:\bbC^2\rightarrow\bbC^2$ is given by $\lambda\rightarrow\lambda'$, $\mu\rightarrow\mu'$.  Thus the dynamics of the map $F$ is relevant.  The lines $\ell_c:=\{\lambda+\mu=c\}$ are $F$-invariant since $\lambda+\mu=\lambda'+\mu'$.  As observed in \cite{GZ04}, the restriction of $F$ to $\ell_c$ is conjugate to a modular mapping given by the matrix
\[
Q_c=\begin{pmatrix}
c & -\frac{c^2}{2}-1\\
1 & -\frac{c}{2}
\end{pmatrix}\in SL(2,\bbC)
\]
which is elliptic if $|c|<4$, parabolic of $|c|=4$, and hyperbolic if $|c|>4$.  When $|c|<4$ the restriction $F\big|_{\ell_c}$ is conjugate to the rotation by an angle $\varphi=\arctan\frac{\sqrt{4-c^2}}{c}$ and hence is ``chaotic" when the angle is irrational.   Thus, in the strip $\Omega=\{\ell_c:|c|<4\}$, the behavior of $F$ is partially chaotic.

Outside the strip $\Omega$, the orbit of each point tends to infinity.  Unfortunately, understanding the dynamics of $F$ does not help us find the spectrum of the pencil $M_n(\mu)$.  For this purpose, the important relation
\[
\mu'-\lambda'=-\lambda-\mu+\frac{4}{\lambda-\mu}
\]
is useful.  If we denote $(\lambda^{(n)},\mu^{(n)})=F^{(n)}(\lambda,\mu)$, then
\[
\mu^{(n)}-\lambda^{(n)}=-\lambda^{(n-1)}-\mu^{(n-1)}+\frac{4}{\lambda^{(n-1)}-\mu^{(n-1)}}
\]
which leads us to the relation
\[
\lambda^{(n)}-\mu^{(n)}=\frac{G_n(\lambda,\mu)}{H_n(\lambda,\mu)}
\]
where
\begin{equation}\label{grec}
\begin{pmatrix}
G_{k+1}\\ H_{k+1}
\end{pmatrix}=
\begin{pmatrix}
-\lambda-\mu & -4\\
1 & 0
\end{pmatrix}
\begin{pmatrix}
G_{k}\\ H_{k}
\end{pmatrix}
\end{equation}
and $G_1(\lambda,\mu)=\mu-\lambda$, $H_1(\lambda,\mu)=1$.  Notice that
\begin{equation}\label{hgk}
H_k=G_{k-1},\qquad\qquad k\geq2.
\end{equation}
To proceed with our calculations, we need the following lemma.

\begin{lemma}\label{ghform}
It holds that
\begin{align}
\label{gform}G_{k}(\lambda,\mu)&=2^{k-1}(\mu-\lambda)U_{k-1}\left(\frac{-\lambda-\mu}{4}\right)-2^{k+1}U_{k-2}\left(\frac{-\lambda-\mu}{4}\right)\\
\label{hform}H_{k}(\lambda,\mu)&=2^{k-1}\left[U_{k-1}\left(\frac{-\lambda-\mu}{4}\right)+\mu U_{k-2}\left(\frac{-\lambda-\mu}{4}\right)\right]
\end{align}
with the understanding that $U_{-1}=0$.  
\end{lemma}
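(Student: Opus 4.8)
The plan is to prove \eqref{gform} and \eqref{hform} simultaneously by induction on $k$, the only ingredients being the recursion \eqref{grec}, the identity \eqref{hgk}, and the Chebyshev three-term recurrence \eqref{urecur}. It is convenient to first isolate the single equivalent identity
\[
G_k(\lambda,\mu)=2^k\!\left[U_k\!\left(\tfrac{-\lambda-\mu}{4}\right)+\mu\,U_{k-1}\!\left(\tfrac{-\lambda-\mu}{4}\right)\right],
\]
which is precisely the polynomial $G_k(z,\mu)$ of Theorem \ref{t2}. Granting this identity, \eqref{hform} is immediate from \eqref{hgk} (it is the displayed identity with $k$ replaced by $k-1$), and \eqref{gform} drops out after one application of \eqref{urecur}, $U_k=2yU_{k-1}-U_{k-2}$, using $2y+\mu=\tfrac{\mu-\lambda}{2}$ for $y=\tfrac{-\lambda-\mu}{4}$. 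So the whole task reduces to proving the displayed closed form.

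Set $y=\tfrac{-\lambda-\mu}{4}$, so that $-(\lambda+\mu)=4y$. Folding \eqref{hgk} into \eqref{grec} turns the coupled recursion into the scalar recursion $G_{k+1}=4y\,G_k-4\,G_{k-1}$ for $k\ge 2$, while the first step is read directly off \eqref{grec}: $G_2=-(\lambda+\mu)G_1-4H_1$ with $H_1=1$. The base cases $k=1$ and $k=2$ are checked from $G_1=\mu-\lambda$, $H_1=1$ and the Chebyshev data $U_{-1}\equiv 0$, $U_0\equiv 1$, $U_1(y)=2y$. For the inductive step, assuming the closed form for indices $k-1$ and $k$, compute
\begin{align*}
G_{k+1}&=4y\,G_k-4\,G_{k-1}\\
&=2^{k+1}\bigl(2yU_k(y)-U_{k-1}(y)\bigr)+\mu\,2^{k+1}\bigl(2yU_{k-1}(y)-U_{k-2}(y)\bigr)\\
&=2^{k+1}U_{k+1}(y)+\mu\,2^{k+1}U_k(y),
\end{align*}
where each grouped pair collapses by \eqref{urecur}. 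This is the closed form at level $k+1$, so the induction closes, and \eqref{gform}, \eqref{hform} follow as above. Since every step is an identity of polynomials in $\lambda,\mu$, no special treatment is needed at the degenerate locus $\lambda+\mu=\pm4$ (where $y=\mp 1$), unlike an approach that would try to diagonalize the transfer matrix of \eqref{grec}.

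I do not anticipate a genuine obstacle: the argument is bookkeeping with powers of $2$ and shifted Chebyshev indices. The two points to watch are that \eqref{grec} fixes $G_2$ through $H_1=1$ rather than through a fictitious ``$G_0$'', so the passage $k=1\to k=2$ must be taken from \eqref{grec} itself and not from the folded scalar recursion, and that the convention $U_{-1}=0$ is exactly what makes the $k=1$ instances of \eqref{gform} and \eqref{hform} correct.
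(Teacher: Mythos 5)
Your argument is correct in substance and takes a genuinely different route from the paper's. The paper does not induct: it writes $(G_{k+1},H_{k+1})^{T}$ as the $k$-th power of the transfer matrix of \eqref{grec} applied to $(\mu-\lambda,1)^{T}$ and then quotes the closed form for powers of a $2\times2$ matrix in terms of Chebyshev polynomials from \cite{McL}. Your induction on the folded scalar recursion $G_{k+1}=4yG_k-4G_{k-1}$ (valid for $k\geq 2$ by \eqref{hgk}), with the base cases read off from \eqref{grec} itself, establishes the same closed form $G_k=2^k\left[U_k(y)+\mu U_{k-1}(y)\right]$, $y=\tfrac{-\lambda-\mu}{4}$, with no external citation; it is more elementary and self-contained, at the price of having to know the closed form in advance (which you correctly took from Theorem \ref{t2}). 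Your care about the step $k=1\to 2$ going through $H_1=1$ rather than a fictitious $G_0$, and about $U_{-1}=0$ handling the $k=1$ instances, is exactly right, and deducing \eqref{hform} from \eqref{hgk} matches the logic of the paper.

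One point must be flagged. After applying \eqref{urecur} with $2y+\mu=\tfrac{\mu-\lambda}{2}$, what you actually obtain is $G_k=2^{k-1}(\mu-\lambda)U_{k-1}(y)-2^{k}U_{k-2}(y)$, which is not literally \eqref{gform}: the printed coefficient of $U_{k-2}$ there is $2^{k+1}$. Your version is the correct one: at $k=2$ the recursion \eqref{grec} gives $G_2=\lambda^2-\mu^2-4$, whereas the printed \eqref{gform} evaluates to $\lambda^2-\mu^2-8$; the printed version is likewise inconsistent with \eqref{hform} combined with \eqref{hgk} and with the polynomial $G_k(z,\mu)$ of Theorem \ref{t2}. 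The $2^{k+1}$ is a misprint introduced when re-indexing the identity $G_{k+1}=2^{k}(\mu-\lambda)U_{k}(y)-2^{k+1}U_{k-1}(y)$ that comes out of the matrix computation (the same slip propagates to Corollary \ref{phizero}, where $4U_{k-2}$ should read $2U_{k-2}$). So rather than asserting that \eqref{gform} ``drops out,'' you should state the corrected coefficient explicitly; with that emendation your proof is complete.
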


\begin{proof}
From the recursion relation \eqref{grec} satisfied by the polynomials $G_k$ and $H_k$ we conclude that
\[
\begin{pmatrix}
G_{k+1}\\ H_{k+1}
\end{pmatrix}=
\begin{pmatrix}
-\lambda-\mu & -4\\
1 & 0
\end{pmatrix}^k
\begin{pmatrix}
\mu-\lambda \\ 1
\end{pmatrix}
\]
Applying \cite[Theorem 1]{McL}, we can rewrite this as
\begin{align*}
\begin{pmatrix}
G_{k+1}\\ H_{k+1}
\end{pmatrix}&=
\begin{pmatrix}
2^kU_k\left(\frac{-\lambda-\mu}{4}\right) & -2^{k+1}U_{k-1}\left(\frac{-\lambda-\mu}{4}\right)\\
2^{k-1}U_{k-1}\left(\frac{-\lambda-\mu}{4}\right) & 2^kU_k\left(\frac{-\lambda-\mu}{4}\right)+2^{k-1}(\mu+\lambda)U_{k-1}\left(\frac{-\lambda-\mu}{4}\right)
\end{pmatrix}
\begin{pmatrix}
\mu-\lambda \\ 1
\end{pmatrix}
\end{align*}
which implies the desired formulas if $k\geq2$.  By inspection, we see that those formulas also holds when $k=1$.
\end{proof}

The relation \eqref{star23} shows that
\begin{equation}\label{phiform}
\Phi_n(\lambda,\mu)=(4-\lambda-\mu)\prod_{k=1}^n\left(\frac{G_k(\lambda,\mu)}{H_k(\lambda,\mu)}\right)^{2^{n-k}}=(4-\lambda-\mu)G_1^{2^{n-2}}G_2^{2^{n-3}}\cdots G_{n-1}G_n
\end{equation}
(see \cite[Section 6]{GZ01}).  
This leads us immediately to the following corollary.

\begin{corollary}\label{phizero}
Using the notation defined above,
\begin{align*}
&\left\{(\lambda,\mu):\Phi_n(\lambda,\mu)=0\right\}=\\
&\quad\{(\lambda,\mu):\lambda+\mu=4\}\bigcup\left\{\bigcup_{k=1}^n\left\{(\lambda,\mu):(\mu-\lambda)U_{k-1}\left(\frac{-\lambda-\mu}{4}\right)=4U_{k-2}\left(\frac{-\lambda-\mu}{4}\right)\right\}\right\}
\end{align*}
where $U_k$ is the degree $k$ Chebyshev polynomial of the second kind.
\end{corollary}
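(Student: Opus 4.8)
The plan is to read off the zero set of $\Phi_n$ directly from the product formula \eqref{phiform} and then rewrite each factor using the Chebyshev expression supplied by Lemma \ref{ghform}. The key point is that \eqref{phiform} presents $\Phi_n(\lambda,\mu)$ as a genuine polynomial factorization,
\[
\Phi_n(\lambda,\mu)=(4-\lambda-\mu)\,G_1^{2^{n-2}}G_2^{2^{n-3}}\cdots G_{n-1}G_n ,
\]
so that $\{(\lambda,\mu):\Phi_n(\lambda,\mu)=0\}$ is the union of the vanishing loci of the individual factors; raising a factor to a positive integer power does not alter its zero set. Hence
\[
\{\Phi_n=0\}=\{\lambda+\mu=4\}\ \cup\ \bigcup_{k=1}^{n}\{G_k(\lambda,\mu)=0\},
\]
and it remains only to identify each set $\{G_k=0\}$.

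For this I would substitute formula \eqref{gform} from Lemma \ref{ghform}. Abbreviating $x=\frac{-\lambda-\mu}{4}$, that formula reads $G_k=2^{k-1}(\mu-\lambda)U_{k-1}(x)-2^{k+1}U_{k-2}(x)$, and dividing through by the nonzero constant $2^{k-1}$ shows that $G_k(\lambda,\mu)=0$ if and only if
\[
(\mu-\lambda)U_{k-1}\!\left(\tfrac{-\lambda-\mu}{4}\right)=4\,U_{k-2}\!\left(\tfrac{-\lambda-\mu}{4}\right),
\]
which is exactly the $k$-th curve in the statement. The only case needing a separate glance is $k=1$: the convention $U_{-1}=0$ turns the condition into $(\mu-\lambda)U_0(x)=0$, i.e. $\lambda=\mu$ since $U_0\equiv 1$, which matches $G_1(\lambda,\mu)=\mu-\lambda$ from \eqref{grec}. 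So the description is correct for every $k$ in the range $1\le k\le n$.

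Combining the two displays yields the asserted description of $\{\Phi_n=0\}$ as the line $\{\lambda+\mu=4\}$ together with the $n$ Chebyshev-type curves. There is no serious obstacle here: all of the substance is already contained in \eqref{phiform} and Lemma \ref{ghform}, and the remaining content is the elementary fact that the zero set of a product of polynomials is the union of the zero sets of its factors. The only thing to watch is the bookkeeping at the ends of the index range together with the convention $U_{-1}=0$, which is handled as above.
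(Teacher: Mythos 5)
Your argument is correct and is exactly the paper's (implicit) reasoning: the corollary is read off immediately from the polynomial factorization \eqref{phiform} together with formula \eqref{gform} of Lemma \ref{ghform}, after cancelling the factor $2^{k-1}$. Your check of the $k=1$ case via the convention $U_{-1}=0$ is a harmless extra verification and changes nothing.
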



\noindent The zero set of $\Phi_n(\lambda,\mu)$ is pictured below for $n=6$.  We can easily calculate
\begin{align*}
\Phi_6(\lambda,\mu)&=(\lambda-\mu)^{16}(4-\lambda-\mu)(4-\lambda^2+\mu^2)^8(\lambda^3+\lambda^2\mu-\mu^3-\lambda(8+\mu^2))^4\\
&\quad(\lambda^5+16\mu+3\lambda^4\mu+8\mu^3-\mu^5+2\lambda^3(\mu^2-8)-2\lambda^2\mu(12+\mu^2)-3\lambda(\mu^4-16))\\
&\quad(64-(\lambda+\mu)(12-(\lambda+\mu)^2)(\lambda^3+\lambda^2\mu-\mu^3-\lambda(8+\mu^2)))\\
&\quad(16+(\lambda+\mu)(\lambda^3+4\mu+\lambda^2\mu-\mu^3-\lambda(12+\mu^2)))^2
\end{align*}
Since $\Phi_n(\lambda,\mu)$ divides $\Phi_{n+1}(\lambda,\mu)$, the pictured set will be a subset of the zero set of $\Phi_m$ for all $m\geq6$.  Notice that there is a critical value of $\mu$ above which we see a zero of $\Phi_6(\lambda,\mu)$ outside of the strip $\{|\lambda+\mu|\leq4\}$.  This value appears to be close to $\mu=1$, but we will see later that for $n=6$, this critical value is actually $7/6$.  Later we will see how this value is calculated.

\begin{figure}[h!]
  \centering
  \includegraphics[width=0.35\textwidth]{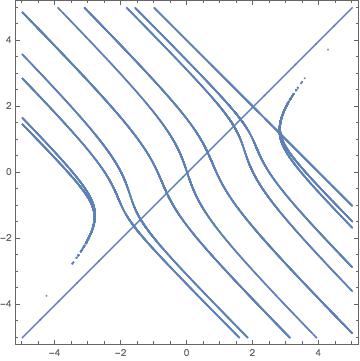}
\caption{The curves showing where $\Phi_6(\lambda,\mu)=0$ when $\lambda$ and $\mu$ are real.  $\lambda$ is the horizontal axis and $\mu$ is the vertical axis.}
\label{fig:2}
\end{figure}

We suspect that for each $n\in\bbN$, the curves defined by the condition $\{(\lambda,\mu):G_n(\lambda,\mu)=0\}$ are irreducible, but we do not have a proof of this fact.

\section{Eigenvalues and their multiplicities}\label{emult}

Our next task is to understand some finer properties of the zero set of $\Phi_n(\lambda,\mu)$.  In particular, if we think of $\mu$ as a fixed constant and $\lambda$ as a variable in $\bbC$, then we want to understand the distribution of zeros of $\Phi_n(\lambda,\mu)$ for large $n$ and also the multiplicities of those zeros.

From \eqref{grec} we see that the polynomials $\{G_k(\lambda,\mu)\}_{k=1}^{\infty}$ satisfy the recurrence relation
\[
G_{k+1}(\lambda,\mu)=(-\lambda-\mu)G_k(\lambda,\mu)-4G_{k-1}(\lambda,\mu).
\]
It follows that
\begin{equation}\label{pgform}
G_k(\lambda,\mu)=2^kP_{k,\mu}(-\lambda/2),
\end{equation}
where $P_{k,\mu}(z)$ is the degree $k$ monic orthogonal polynomial corresponding to the Jacobi matrix
\[
\mcj^*(\mu)=\begin{pmatrix}
-\frac{\mu}{2} & 1 & 0 & 0 &\cdots\\
1 & \frac{\mu}{2} & 1 & 0 &\cdots\\
0 & 1 & \frac{\mu}{2} & 1 & \ddots\\
\vdots & \ddots & \ddots & \ddots & \ddots
\end{pmatrix}
\]
One immediate consequence of \eqref{pgform} is that when $\mu$ is fixed all the zeros of $G_k(\lambda,\mu)$ are simple.  Furthermore, our discussion in Section \ref{prelim} showed the relationship between the zeros of $G_k(\lambda,\mu)$ and the spectrum of the corresponding Jacobi matrix.  Thus we can deduce information about the zeros of $G_k$ if we can identify the spectrum of the operator $\mcj^*(\mu)$.  This is the content of our next result.

\begin{proposition}\label{spectrum}
When $\mu$ is any complex number, it holds that the spectrum of $\mcj^*(\mu)$ is given by the straight line segment joining $-2+\frac{\mu}{2}$ to $2+\frac{\mu}{2}$ and - precisely when $|\mu|>1$ - the isolated point $-\frac{\mu}{2}-\frac{1}{\mu}$.
\end{proposition}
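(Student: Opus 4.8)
The plan is to view $\mcj^*(\mu)$ as a rank-one perturbation of a shifted free Jacobi matrix and to read off its spectrum from the classical formula for the resolvent of such a perturbation. Write $J_0=\mathfrak{L}+\mathfrak{R}$ for the free Jacobi matrix on $\ell^2(\bbN)$, let $\{e_n\}_{n\ge1}$ be the standard basis, and let $E_{11}$ be the matrix with a single nonzero entry equal to $1$ in position $(1,1)$. The only diagonal entry of $\mcj^*(\mu)$ not equal to $\mu/2$ is the top-left one, which equals $-\mu/2$; all off-diagonal entries are $1$; hence
\[
\mcj^*(\mu)=\Big(J_0+\tfrac{\mu}{2}I\Big)-\mu E_{11}.
\]
Since $\spm(J_0)=[-2,2]$, the spectrum of $J_0+\tfrac{\mu}{2}I$ is the segment $\Sigma_\mu:=\tfrac{\mu}{2}+[-2,2]$, and $-\mu E_{11}$ is a rank-one (in particular compact) perturbation.

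First I would show $\Sigma_\mu\subseteq\spm(\mcj^*(\mu))$. Compact perturbations do not change the set of points at which a bounded operator minus a scalar fails to be Fredholm, and for $J_0+\tfrac{\mu}{2}I$ that set is exactly $\Sigma_\mu$; since this set is always contained in the spectrum, $\Sigma_\mu\subseteq\spm(\mcj^*(\mu))$. Equivalently, and self-containedly, for $z=\tfrac{\mu}{2}+2\cos\theta$ with $\theta\in(0,\pi)$ one exhibits a Weyl sequence by truncating the plane wave $(e^{ik\theta})_{k\ge1}$ to a long window far from the corner entry (so $\mcj^*(\mu)-z$ annihilates the bulk and leaves only $O(1)$ boundary terms against a vector of norm of order $\sqrt{N}$), and the two endpoints follow because the spectrum is closed.

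Next I would compute $\spm(\mcj^*(\mu))\setminus\Sigma_\mu$. For $z\notin\Sigma_\mu$ set $R(z)=\big(J_0+\tfrac{\mu}{2}I-zI\big)^{-1}$ and factor
\[
\mcj^*(\mu)-zI=\big(J_0+\tfrac{\mu}{2}I-zI\big)\big(I-\mu\,R(z)E_{11}\big).
\]
The first factor is invertible, while $I-\mu\,R(z)E_{11}$, being the identity minus a rank-one operator, is invertible precisely when $\mu\,[R(z)]_{11}\ne1$, i.e. when $\mu\,m\big(z-\tfrac{\mu}{2}\big)\ne1$, where $m(w):=\big[(J_0-wI)^{-1}\big]_{11}$ is the Weyl $m$-function of the free Jacobi matrix. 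Solving $(J_0-wI)r=e_1$ for the coordinate $r_1$ (or using the functional equation $m+m^{-1}=-w$ together with $m(w)\to0$ as $w\to\infty$) shows that $m(w)$ is the root $\xi$ of $\xi^2+w\xi+1=0$ with $|\xi|<1$; explicitly $m(w)=\tfrac12\big(-w+\sqrt{w^2-4}\big)$, with the branch of $\sqrt{\,\cdot\,}$ analytic on $\bbC\setminus[-2,2]$ and $\sim w$ at infinity, the map $\xi\mapsto-(\xi+\xi^{-1})$ being a bijection of $\{0<|\xi|<1\}$ onto $\bbC\setminus[-2,2]$. Thus $\mu\,m(w)=1$ requires $1/\mu$ to be a root of $\xi^2+w\xi+1=0$; since that polynomial has roots of product $1$, this forces $w=-(\mu+\mu^{-1})$, and then $m(w)$ equals $1/\mu$ (rather than the other root $\mu$) exactly when $|1/\mu|<1$, i.e. $|\mu|>1$. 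Undoing $w=z-\tfrac{\mu}{2}$ leaves the single candidate $z_0=-\tfrac{\mu}{2}-\tfrac{1}{\mu}$, occurring if and only if $|\mu|>1$.

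Finally I would verify that, when $|\mu|>1$, $z_0$ is a genuine isolated point of the spectrum. First, $w_0:=-(\mu+\mu^{-1})\notin[-2,2]$: otherwise the two roots $\mu$ and $1/\mu$ of $\xi^2+w_0\xi+1=0$ would be complex conjugates on the unit circle, forcing $|\mu|=1$. Hence $z_0\notin\Sigma_\mu$, so $z_0$ is an isolated point of $\spm(\mcj^*(\mu))=\Sigma_\mu\cup\{z_0\}$. And $z_0$ is an eigenvalue: the factorization gives $(\mcj^*(\mu)-z_0I)v=\big(1-\mu\,m(w_0)\big)e_1=0$ for the nonzero vector $v:=R(z_0)e_1\in\ell^2(\bbN)$. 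I expect the only genuinely delicate point to be the non-self-adjoint case $\mu\in\bbC\setminus\bbR$, where the orthogonal-polynomial and spectral-measure arguments of Section~\ref{prelim} are unavailable; there it is essential to use the ``small-root'' (equivalently, correct-branch) description of $m$, which is exactly what shows that the formal solution $w_0=-(\mu+\mu^{-1})$ satisfies $m(w_0)=1/\mu$ only when $|\mu|>1$, rather than for all $\mu\ne0$.
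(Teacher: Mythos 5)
Your proof is correct, and it takes a genuinely different route from the paper. The paper's first proof (the one valid for all complex $\mu$) fixes the essential spectrum $[-2,2]$ by citing Beckermann, then locates eigenvalues via the continued-fraction criterion of Aptekarev--Kaliaguine--Van Assche: $z_0$ is an eigenvalue iff the denominators $L_n(z_0)=(z_0+\mu)U_{n-1}(z_0/2)-U_{n-2}(z_0/2)$ lie in $\ell^2$, which forces $z_0=-\mu-1/\mu$ via Chebyshev ratio asymptotics and is then checked to be $\ell^2$ exactly when $|\mu|>1$ by an explicit computation with $\mu=-e^{ix}$. Its second proof is $m$-function based like yours, but is restricted to real $\mu$ and leans on spectral-measure results for self-adjoint Jacobi matrices. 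You instead write $\mcj^*(\mu)=\bigl(\mathfrak{L}+\mathfrak{R}+\tfrac{\mu}{2}I\bigr)-\mu E_{11}$, get $\Sigma_\mu=\tfrac{\mu}{2}+[-2,2]\subseteq\spm(\mcj^*(\mu))$ from Fredholm stability (or Weyl sequences), and resolve everything off $\Sigma_\mu$ by the rank-one factorization $\mcj^*(\mu)-z=\bigl(J_0+\tfrac{\mu}{2}-z\bigr)\bigl(I-\mu R(z)E_{11}\bigr)$, reducing the question to $\mu\,m(z-\tfrac{\mu}{2})=1$ with $m$ the small root of $\xi^2+w\xi+1=0$; the Joukowski-map/small-root argument then yields the candidate $z_0=-\tfrac{\mu}{2}-\tfrac{1}{\mu}$ and the threshold $|\mu|>1$ in one stroke, with $R(z_0)e_1$ an explicit eigenvector. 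What your approach buys: it is self-contained (no appeal to the AKVA criterion or to Beckermann beyond $\spm(J_0)=[-2,2]$), it handles non-self-adjoint $\mu$ with no extra work, and it shows directly that outside $\Sigma_\mu$ the only way invertibility can fail is at an eigenvalue, a point the paper's first proof gets only implicitly. What the paper's route buys is contact with the polynomials $G_k$, $H_k$ themselves — the quantities $L_n$ are exactly the objects recursed on in Section \ref{spec} — so it dovetails with the multiplicity and zero-distribution analysis that follows, whereas your argument treats the operator abstractly. The only hygiene points worth making explicit in a write-up are the ones you already flag: the branch/small-root normalization of $m$, and the remark that for $|\mu|=1$ the formal solution $w_0=-(\mu+\mu^{-1})$ falls inside $[-2,2]$, so no isolated point is produced there.
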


\begin{proof}[First proof of Proposition \ref{spectrum}:]
The spectrum of $\mcj^*(\mu)$ is just a translation (in $\bbC$) by $\mu/2$ of the spectrum of
\[
\mcj^*_1(\mu)=\begin{pmatrix}
-\mu & 1 & 0 & 0 &\cdots\\
1 & 0 & 1 & 0 &\cdots\\
0 & 1 & 0 & 1 & \ddots\\
\vdots & \ddots & \ddots & \ddots & \ddots
\end{pmatrix}
\]
so we will consider the spectrum of this operator instead.  The essential spectrum of $\mcj^*_1(\mu)$ is $[-2,2]$ (see \cite[Equation 2.12]{Beck}), so we need only locate the isolated eigenvalues of $\mcj^*_1(\mu)$.  For this, we will use \cite[Proposition 1]{AKVA}.  If we consider the continued fraction
\[
\cfrac{1}{z+\mu-\cfrac{1}{z-\cfrac{1}{z-\cdots\cfrac{1}{z}}}}=\frac{1}{z+\mu-\frac{A_{n-1}(z)}{B_{n-1}(z)}}=\frac{K_{n}(z)}{L_n(z)},
\]
then $z_0$ is an eigenvalue of $\mcj^*_1(\mu)$ if and only if $\{L_n(z_0)\}_{n\in\bbN}\in\ell^2(\bbN)$.  By \cite[Equation 6]{AKVA}, we know that $B_{n-1}(z)=U_{n-1}(z/2)$ and $A_{n-1}(z)=U_{n-2}(z/2)$.  Thus,
\[
K_n(z)=U_{n-1}(z/2)\qquad\qquad\qquad L_n(z)=(z+\mu)U_{n-1}(z/2)-U_{n-2}(z/2)
\]
so the only possible isolated eigenvalues are values of $z_0\not\in[-2,2]$ for which
\[
\sum_{n=2}^{\infty}|(z_0+\mu)U_{n-1}(z_0/2)-U_{n-2}(z_0/2)|^2<\infty
\]
For such a value of $z_0$, it must certainly be the case that
\[
(z_0+\mu)-\frac{U_{n-2}(z_0/2)}{U_{n-1}(z_0/2)}\rightarrow0
\]
as $\nri$.  Taking $n$ to infinity and invoking (\ref{uratio}) shows
\[
z_0+\mu=\frac{2}{z_0+\sqrt{z_0^2-4}},
\]
and hence $z_0=-\mu-\frac{1}{\mu}$.  Thus, this is the only possible isolated eigenvalue of $\mcj^*_1(\mu)$.
To see that it actually is an eigenvalue, we write $\mu=-e^{ix}$ for some $x\in\bbC$.  Then we are left to evaluate
\[
\sum_{n=2}^{\infty}|e^{-ix}U_{n-1}(\cos(x))-U_{n-2}(\cos(x))|^2
\]
By \eqref{utrig} we can write
\begin{align*}
&e^{-ix}U_{n-1}(\cos(x))-U_{n-2}(\cos(x))=e^{-ix}\frac{\sin(nx)}{\sin(x)}-\frac{\sin((n-1)x)}{\sin(x)}\\
&=e^{-ix}\frac{e^{inx}-e^{-inx}}{e^{ix}-e^{-ix}}-\frac{e^{i(n-1)x}-e^{-i(n-1)x}}{e^{ix}-e^{-ix}}=\frac{e^{-i(n-1)x}-e^{-i(n+1)x}}{e^{ix}-e^{-ix}}=e^{-inx}\frac{e^{ix}-e^{-ix}}{e^{ix}-e^{-ix}}=e^{-inx}.
\end{align*}
Thus the above sum simplifies to
\[
\sum_{n=2}^{\infty}\left|e^{-inx}\right|^2=\sum_{n=2}^{\infty}|\mu|^{-2n}
\]
which is finite if and only if $|\mu|>1$.  After translating by $\mu/2$ to get the spectrum of $\mcj^*(\mu)$, we get the desired conclusion.
\end{proof}

\begin{proof}[Second proof of Proposition \ref{spectrum} for real $\mu$:]
Define
\[
\mcj_0(\mu)=\begin{pmatrix}
\frac{\mu}{2} & 1 & 0 & 0 &\cdots\\
1 & \frac{\mu}{2} & 1 & 0 &\cdots\\
0 & 1 & \frac{\mu}{2} & 1 & \ddots\\
\vdots & \ddots & \ddots & \ddots & \ddots
\end{pmatrix}
\]
whose corresponding spectral measure for the vector $e_0=(1,0,0,\ldots)^T$ is
\[
d\nu_0:=\frac{1}{2\pi}\sqrt{4-(x-\mu/2)^2}\,dx,\qquad\qquad x\in[-2+\mu/2,2+\mu/2]
\]
Since $\mcj^*(\mu)$ is a compact perturbation of $\mcj_0(\mu)$, we know that the essential spectrum of $\mcj^*(\mu)$ is $[-2+\mu/2,2+\mu/2]$.  Thus the spectrum of $\mcj^*(\mu)$ is this interval and possibly a countable set of mass points outside this interval, which we now determine.  

For any probability measure $\gamma$ on $\bbR$, we define
\[
m(z;\gamma)=\int_{\bbR}\frac{1}{x-z}d\gamma(x)
\]
Let $\nu^*$ be the spectral measure of $\mcj^*(\mu)$ and $e_0$.  Then
\[
m(z;\nu^*)=\frac{1}{-\mu/2-z-m(z;\nu_0)}
\]
(see \cite[Theorem 3.2.4]{Rice}).  Also using \cite[Theorem 3.2.4]{Rice}, one finds that
\[
m(z;\nu_0)=\frac{\mu}{4}-\frac{z}{2}-\frac{\sqrt{(z-\mu/2)^2-4}}{2}
\]
where the branch cut is taken to be positive along the interval $[2+\mu/2,\infty)$.  Thus
\[
m(z;\nu^*)=\frac{-4}{2z+3\mu-\sqrt{(2z-\mu)^2-16}}=\frac{-(2z+3\mu+2\sqrt{(z-\frac{\mu}{2})^2-4})}{4(z\mu+\frac{\mu^2}{2}+1)}
\]

We know from \cite[Proposition 2.3.12]{Rice} that the singular part of $\nu^*$ is supported on the set
\[
\{x\in\bbR:\lim_{t\rightarrow0^+}\mathrm{Im}(m(x+it;\nu^*))=\infty\}
\]
We calculate
\begin{align*}
\lim_{t\rightarrow0^+}\mathrm{Im}(m(x+it;\nu^*))&=\frac{\sqrt{(\mu-2x)^2-16}}{2(\mu x+\mu^2/2+1)}
\end{align*}
Thus we see that if $\mu=0$, then there is no singular component to the spectrum and if $\mu\neq0$, then the only possible mass point is $x^*=-\frac{1}{\mu}-\frac{\mu}{2}$.
To determine if this point is actually a mass point, we recall  \cite[Proposition 2.3.12]{Rice}, which tells us that if there is a mass point at $x^*$, then its mass is
\[
\lim_{t\rightarrow0^+}(-it)m(x+it;\nu^*)
\]
Plugging in $x^*=\frac{-1-\mu^2/2}{\mu}$, then we consider
\begin{align*}
\nu^*\{x^*\}&=\lim_{t\rightarrow0^+}\frac{2(\frac{-1-\mu^2/2}{\mu}+it)+3\mu+2\sqrt{(\frac{-1-\mu^2/2}{\mu}+it-\frac{\mu}{2})^2-4}}{4\mu}\\
&=\frac{\mu-\frac{1}{\mu}+\sqrt{(\mu+\frac{1}{\mu})^2-4}}{2\mu}
\end{align*}
where the branch cut of $\sqrt{z^2-4}$ is taken on $[-2,2]$ so that it is positive if $z>2$.  With this choice of square root, one finds that $\nu^*\{x^*\}=0$ if and only if $|\mu|\leq1$, so $\nu^*$ only includes a mass point if $|\mu|>1$.
\end{proof}

\noindent\textit{Remark.}  We actually know from \cite[Chapter 2]{Rice} that the spectral measure of $\mcj^*(\mu)$ and the vector $e_0$ is given by
\[
\chi_{[-2+\frac{\mu}{2},2+\frac{\mu}{2}]}\frac{\sqrt{4-(x-\frac{\mu}{2})^2}}{2\pi(\mu x+\frac{\mu^2}{2}+1)}\,dx+\frac{\mu-\frac{1}{\mu}+\sqrt{(\mu+\frac{1}{\mu})^2-4}}{2\mu}\delta_{-\frac{\mu}{2}-\frac{1}{\mu}}
\]
assuming $|\mu|>1$.  If $|\mu|\leq1$, then the spectral measure is just the absolutely continuous part of this measure.

\medskip

\begin{figure}[h!]
  \centering
  \includegraphics[width=0.55\textwidth]{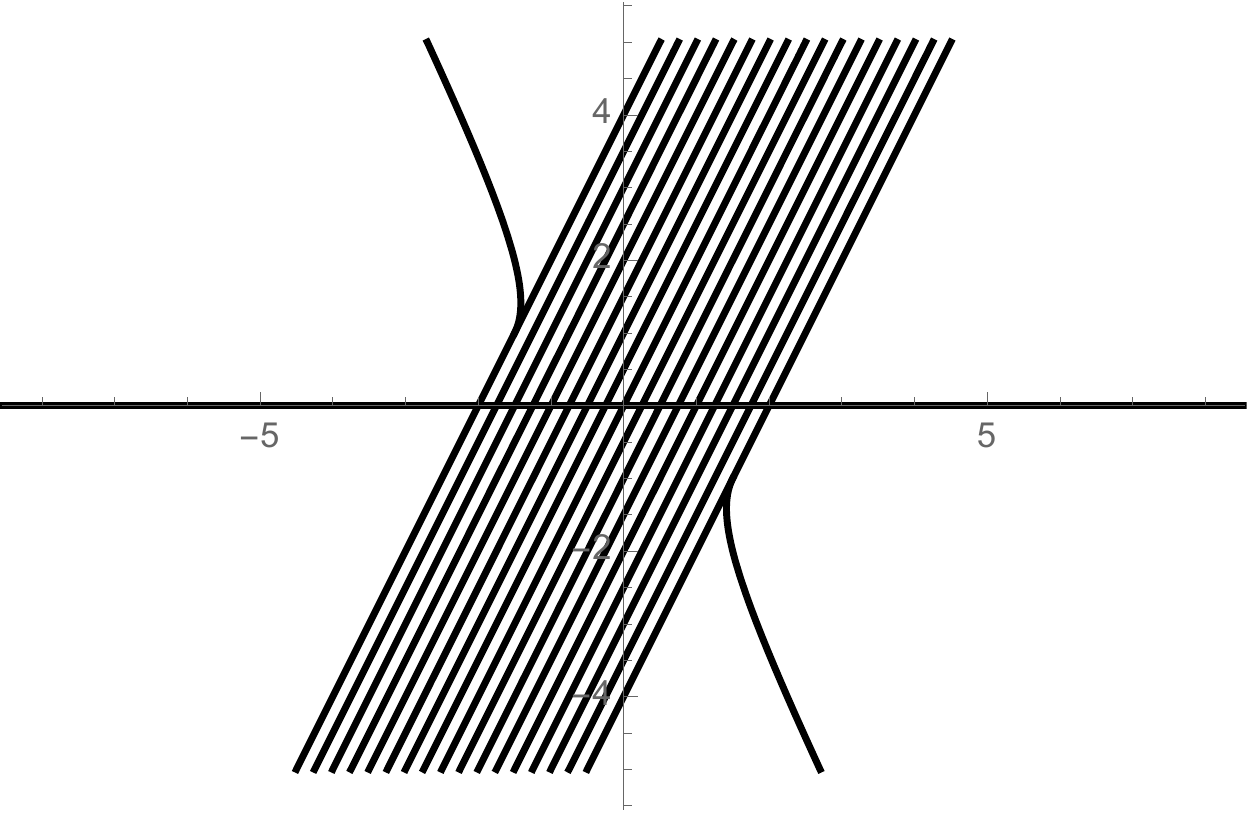}
\caption{The spectrum of $\mcj^*(\mu)$ when $\mu$ is real.  The vertical axis corresponds to $\mu$ and then the portion of the picture at that horizontal level is the spectrum of $\mcj^*(\mu)$.}
\label{fig:1}
\end{figure}



Proposition \ref{spectrum} identifies the spectrum of the linear pencil $\mcj_{free}+\mu D$, where $\mcj_{free}=\mathfrak{L}+\mathfrak{R}$ and $D=\mathrm{diag}(-1/2,1/2,1/2,1/2,\ldots)$.

\begin{proof}[Proof of Theorem \ref{t1}:]
(a)  Assume $-1\leq\mu\leq1$.  We have already seen that the spectrum of $M_{\mu}$ consists of the zeros of the polynomials $G_k(z,\mu)$ and the point $4-\mu$.  Proposition \ref{spectrum} and the formula \eqref{pgform} imply all of the zeros of $G_k(z,\mu)$ are inside $[-4-\mu,4-\mu]$.  Finally, the information about the zeros of orthogonal polynomials presented in Section \ref{op} shows that these zeros densely pack this interval.

(b) Assume $|\mu|>1$.  As in part (a), the zeros of $G_k(z,\mu)$ densely pack $[-4-\mu,4-\mu]$.  Also, the information about the zeros of orthogonal polynomials presented in Section \ref{op} shows that there is a collection of zeros of $P_{k,\mu}(z)$ (defined before Proposition \ref{spectrum}) approaching $-\mu/2-1/\mu$ as $k\rightarrow\infty$.  The relation \eqref{pgform} then implies there is a sequence of zeroes of $G_k(z,\mu)$ whose only accumulation point as $k\rightarrow\infty$ is $\mu+2/\mu$.  It is easy to see that $\mu+2/\mu\not\in[-4-\mu,4-\mu]$.
\end{proof}

\medskip

\begin{proof}[Proof of Theorem \ref{t2}:]
As in \cite[Section 7]{GZ01}, we know that the spectral measure $\nu_{\mu}$ is the weak-$*$ limit of the normalized counting measures of the zeros of $\Phi_n(\lambda,\mu)$.  The formula \eqref{phiform} then gives the desired formula for $\nu_{\mu}$.
\end{proof}

\section{Computation of the Spectral Measure}\label{specomp}

Theorem \ref{t2} provides a general formula for $\nu_{\mu}$, so now we want to extract additional information from it.  From our earlier observations, we know that when $\mu$ is real, the zeros of $G_k(\lambda,\mu)$ will be in the interval $(-4-\mu,4-\mu)$ when $|\mu|\leq1$ and when $|\mu|>1$, the zeros will mostly be in this interval, but there will be a single zero tending monotonically to $\mu+\frac{2}{\mu}$ as $k\rightarrow\infty$.

Our next task is to determine precisely when $G_k(\lambda,\mu)$ contains a zero outside of the interval $[-4-\mu,4-\mu]$.  We have already mentioned that this occurs when $|\mu|>1$ and $k$ is sufficiently large, but we want to be more precise about what we mean by ``sufficiently large."

To complete this calculation, let us assume that $\mu>1$ (the case $\mu<-1$ can be handled similarly).  Using our formula for $H_k=G_{k-1}$ and the fact that $U_k(1)=k+1$, we conclude that $G_k(4-\mu,\mu)=0$ if and only if $\mu=\frac{k+1}{k}$ and $G_k(\lambda,\mu)$ changes sign in the interval $(4-\mu,\infty)$ if $\mu>\frac{k+1}{k}$.  Thus, for a fixed $\mu>1$ we define $m(\mu)$ so that
\begin{equation}\label{mucut}
\frac{m(\mu)+1}{m(\mu)}\leq\mu<\frac{m(\mu)}{m(\mu)-1}
\end{equation}
and observe that at the points $\mu=2,\frac{3}{2},\frac{4}{3},\ldots$ there is a bifurcation of the multiplicity of the zeros of $\Phi_n(\lambda,\mu)$ that are outside the interval $[-4-\mu,4-\mu]$.  We summarize our conclusions in the following proposition.

\begin{proposition}\label{kzero}
If $\mu>1$, then $G_k(\lambda,\mu)$ has a zero outside the interval $(-4-\mu,4-\mu)$ if and only if $k\geq m(|\mu|)$.
\end{proposition}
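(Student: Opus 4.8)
The plan is to translate the statement into a question about the eigenvalues of a finite truncated Jacobi matrix and then pin down the threshold by a short sign computation. By \eqref{pgform} we have $G_k(\lambda,\mu)=2^kP_{k,\mu}(-\lambda/2)$, so the zeros of $G_k(\cdot,\mu)$ lying outside $(-4-\mu,4-\mu)$ are in bijection with the zeros of $P_{k,\mu}$ lying outside $(-2+\tfrac{\mu}{2},2+\tfrac{\mu}{2})$, i.e.\ with the eigenvalues outside that interval of the $k\times k$ upper-left truncation $\mcj^*_k(\mu)$ of $\mcj^*(\mu)$. Now $\mcj^*_k(\mu)$ is obtained from the truncation $\mcj_{0,k}(\mu)$ of the constant-diagonal Jacobi matrix (diagonal entries $\mu/2$, off-diagonal entries $1$) by altering only the $(1,1)$ entry, that is by the negative semidefinite rank-one term $-\mu\,e_0e_0^{*}$. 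The eigenvalues of $\mcj_{0,k}(\mu)$ are $\tfrac{\mu}{2}+2\cos\tfrac{j\pi}{k+1}$, $j=1,\dots,k$, all of which lie strictly inside $(-2+\tfrac{\mu}{2},2+\tfrac{\mu}{2})$; since $\mu>1>0$, the rank-one interlacing inequalities $\mu_j\le\ell_j\le\mu_{j+1}$ (with $\mu_1\le\cdots\le\mu_k$ and $\ell_1\le\cdots\le\ell_k$ the increasingly ordered spectra of $\mcj^*_k(\mu)$ and $\mcj_{0,k}(\mu)$) then force $\mcj^*_k(\mu)$ to have no eigenvalue in $[2+\tfrac{\mu}{2},\infty)$ and at most one in $(-\infty,-2+\tfrac{\mu}{2}]$. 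Equivalently, $G_k(\cdot,\mu)$ has at most one zero outside $(-4-\mu,4-\mu)$, and such a zero, if present, lies in $[4-\mu,\infty)$ — a finite-level companion to Proposition \ref{spectrum}.

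Next I would detect this outlying zero through the sign of $G_k$ at the band edge $\lambda=4-\mu$. Because $P_{k,\mu}$ is monic of degree $k$, it has sign $(-1)^k$ to the left of its smallest zero, so together with the previous paragraph: $P_{k,\mu}$ has a zero strictly below $-2+\tfrac{\mu}{2}$ iff $P_{k,\mu}(-2+\tfrac{\mu}{2})$ has sign $(-1)^{k-1}$, it has no zero there if the sign is $(-1)^k$, and it has a zero exactly at $-2+\tfrac{\mu}{2}$ if $P_{k,\mu}(-2+\tfrac{\mu}{2})=0$. Using $G_k=H_{k+1}$ from \eqref{hgk}, formula \eqref{hform}, and $U_n(-1)=(-1)^n(n+1)$, one computes $G_k(4-\mu,\mu)=2^kP_{k,\mu}(-2+\tfrac{\mu}{2})=(-1)^{k-1}2^k\bigl(k(\mu-1)-1\bigr)$, which is linear in $\mu$: it vanishes exactly at $\mu=\tfrac{k+1}{k}$, has sign $(-1)^k$ for $1<\mu<\tfrac{k+1}{k}$, and sign $(-1)^{k-1}$ for $\mu>\tfrac{k+1}{k}$ (in agreement with the observation made just before the proposition). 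Therefore: for $1<\mu<\tfrac{k+1}{k}$ there is no zero of $G_k(\cdot,\mu)$ outside $(-4-\mu,4-\mu)$; for $\mu>\tfrac{k+1}{k}$ there is exactly one, lying in $(4-\mu,\infty)$; and for $\mu=\tfrac{k+1}{k}$ the endpoint $4-\mu$ is itself a zero and lies outside the open interval $(-4-\mu,4-\mu)$. In every case, $G_k(\cdot,\mu)$ has a zero outside $(-4-\mu,4-\mu)$ if and only if $\mu\ge\tfrac{k+1}{k}$.

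Finally one checks the arithmetic equivalence $\mu\ge\tfrac{k+1}{k}\iff k\ge m(\mu)$ directly from \eqref{mucut}: since $m\mapsto\tfrac{m+1}{m}$ and $m\mapsto\tfrac{m}{m-1}$ are strictly decreasing, $k\ge m(\mu)$ gives $\mu\ge\tfrac{m(\mu)+1}{m(\mu)}\ge\tfrac{k+1}{k}$, whereas $k\le m(\mu)-1$ gives $\mu<\tfrac{m(\mu)}{m(\mu)-1}\le\tfrac{k+1}{k}$; and $m(|\mu|)=m(\mu)$ because $\mu>1$. The step I expect to require the most care is the rank-one interlacing used in the first paragraph: it is precisely the sign-definiteness of $-\mu\,e_0e_0^{*}$ (available because $\mu>1>0$) that confines $\mcj^*_k(\mu)$ to a single stray eigenvalue and forces it to the left of the band. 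Everything after that is the sign bookkeeping above, and the one genuine computation — the value of $G_k$ at $\lambda=4-\mu$ — has essentially already been carried out in the paragraph preceding the proposition.
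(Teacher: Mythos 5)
Your proof is correct, and its decisive computation coincides with the paper's: evaluating $G_k$ at the band edge $\lambda=4-\mu$ via \eqref{hgk} and \eqref{hform} (the paper quotes \eqref{uone}, you use $U_n(-1)=(-1)^n(n+1)$, which is the same fact up to parity), which produces the threshold $\mu=\tfrac{k+1}{k}$ and then, with \eqref{mucut}, the condition $k\geq m(\mu)$. Where you genuinely diverge is in how the remaining zeros are confined. The paper obtains ``at most one zero of $G_k(\cdot,\mu)$ outside the band, and only to the right of it'' from Proposition \ref{spectrum} together with the general facts about zeros of orthogonal polynomials recalled in Section \ref{op} (zeros accumulate only on the support of the orthogonality measure, at most one zero in each gap, monotone approach of the outlier), i.e.\ it passes through the infinite operator $\mcj^*(\mu)$. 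You instead argue at each finite level: via \eqref{pgform} the zeros of $P_{k,\mu}$ are the eigenvalues of the $k\times k$ truncation, which differs from the truncation of the constant-diagonal Jacobi matrix by the negative semidefinite rank-one term $-\mu e_0e_0^{*}$, and the rank-one interlacing inequalities then allow at most one stray eigenvalue and force it below the band (equivalently, to the right of $4-\mu$ in $\lambda$). Your route is more elementary and self-contained, gives the one-sidedness and uniqueness of the stray zero for every fixed $k$ rather than only asymptotically, and treats the boundary case $\mu=\tfrac{k+1}{k}$ (a zero exactly at the endpoint $4-\mu$, which lies outside the open interval) explicitly; the paper's route recycles spectral machinery it needs anyway for Theorem \ref{t1} and additionally yields the monotone convergence of the stray zero to $\mu+2/\mu$, which is used again in Sections \ref{specomp} and \ref{bulkz}.
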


\noindent\textit{Remark.}  Proposition \ref{kzero} explains how we found the value of $7/6$ in describing Figure 2 before Section \ref{emult}.  Of course the conclusion of Proposition \ref{kzero} also holds true for $\Phi_k(\lambda,\mu)$.

\smallskip

If $\frac{k+1}{k}<\mu<\frac{k}{k-1}$, then $G_n(z,\mu)$ has a zero outside of $[-4-\mu,4-\mu]$ for all $n\geq k$ and thus the same holds true for $\Phi_n(\lambda,\mu)$.  The multiplicity of any such zero is equal to the exponent on the corresponding factor $G_m$ in \eqref{phiform}.

As we have mentioned, the spectral measure that we wish to compute can be realized as the limiting measure of the counting measures of the zeros of $\Phi_n(\lambda,\mu)$.  
If $T_1$ and $T_2$ are bounded operators, we define the real joint spectrum of $T_1$ and $T_2$ to be all pairs of real numbers $(r_1,r_2)$ such that $T_1-r_1T_2-r_2I$ does not have a bounded inverse.  We can now completely describe the real joint spectrum of the linear pencil of operators that we have been considering.

\begin{theorem}\label{t3}
The real joint spectrum of operators of convolution in $\ell^2(\mcl)$ given by elements $a+a^{-1}+b+b^{-1}$ and $c$ from the group algebra consists of the strip $\{|\lambda+\mu|\leq4\}$, the line $\{\lambda=\mu\}$, and a countable collection of disjoint curves $\{\gamma_n^{\pm}\}_{n=1}^{\infty}$ that are asymptotic to the line $\{\lambda=\mu\}$.  Each curve $\gamma_n^{\pm}$ is outside the strip $\{|\lambda+\mu|\leq4\}$ and the curve $\gamma_n^+$ meets the strip $\{|\lambda+\mu|\leq4\}$ at the point $(\lambda=3-1/n,\mu=1+1/n)$ (similarly $\gamma_n^-$ meets the strip $\{|\lambda+\mu|\leq4\}$ at the point $(\lambda=-3+1/n,\mu=-1-1/n)$).
\end{theorem}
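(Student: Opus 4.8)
The plan is to obtain Theorem~\ref{t3} from the fiberwise description of $\spm(M_\mu)$ already established in Theorems~\ref{t1}--\ref{t2}, and then to organize the fibers into curves by parametrizing the zero sets of the polynomials $G_k$ through Chebyshev polynomials. First I would reduce to a union of spectra: for real $\mu$ the operator $M_\mu=(a+a^{-1}+b+b^{-1})-\mu c$ is self-adjoint, so $M_\mu-\lambda I$ fails to be boundedly invertible exactly when $\lambda\in\spm(M_\mu)$, hence the real joint spectrum is the closed set $J=\{(\lambda,\mu)\in\bbR^2:\lambda\in\spm(M_\mu)\}$. By the argument in the proof of Theorem~\ref{t2} together with \eqref{phiform} (and $\spm(M_\mu)=\supp\nu_\mu$), for each fixed real $\mu$ one has $\spm(M_\mu)=\overline{\{4-\mu\}\cup\bigcup_{k\ge1}Z_k(\mu)}$, where $Z_k(\mu):=\{z\in\bbR:G_k(z,\mu)=0\}$ with $G_k$ as in Theorem~\ref{t2}; since $G_1(z,\mu)=\mu-z$ this makes $Z_1(\mu)=\{\mu\}$, and $4-\mu$ lies on $\{\lambda+\mu=4\}$, so $J$ is the closure of $\{\lambda+\mu=4\}\cup\{\lambda=\mu\}\cup\bigcup_{k\ge2}\{G_k(\lambda,\mu)=0\}$. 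By Theorem~\ref{t1}, $\bigcup_kZ_k(\mu)$ is dense in $[-4-\mu,4-\mu]$ for all real $\mu$, so $\{|\lambda+\mu|\le4\}\subseteq J$; and by Proposition~\ref{spectrum} with \eqref{pgform}, every $Z_k(\mu)\subset[-4-\mu,4-\mu]$ when $|\mu|\le1$, while for $|\mu|>1$ each $Z_k(\mu)$ has at most one point outside that interval. Thus the part of $J$ with $|\mu|\le1$ is exactly the strip, and the rest of $J$ comes from the line $\{\lambda=\mu\}$ (the $G_1$ locus) and the exceptional zeros of the $G_k$, $k\ge2$.

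Next I would parametrize the zero sets. Fixing $k$ and putting $w=\tfrac{-\lambda-\mu}{4}$, the formula $G_k(z,\mu)=2^k\bigl[U_k(w)+\mu U_{k-1}(w)\bigr]$ and \eqref{urecur} turn $G_k=0$ into $(2w+\mu)U_{k-1}(w)=U_{k-2}(w)$; since $U_{k-1}$ and $U_k$ have no common zero, every zero of $G_k$ occurs where $U_{k-1}(w)\neq0$ (in particular wherever $|w|\ge1$), and there the equation is equivalent to
\[
\lambda=-2w-r_k(w),\qquad\mu=-2w+r_k(w),\qquad r_k(w):=\frac{U_{k-2}(w)}{U_{k-1}(w)}.
\]
The strip corresponds to $|w|\le1$, its complement to $w\in(-\infty,-1)\cup(1,\infty)$. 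On $(-\infty,-1)$ I would write $r_k=\tfrac12 g_{k-1}$ for the ratios $g_m=\hat U_{m-1}/\hat U_m$ of the monic Chebyshev polynomials $\hat U_m=2^{-m}U_m$; the continued-fraction recursion $g_m=(w-\tfrac14 g_{m-1})^{-1}$ with $g_1=1/w$ gives by induction $g_m'<0$ on that ray (the standard gap-monotonicity of orthogonal-polynomial ratios), so $r_k'<0$ and $\tfrac{d\mu}{dw}=-2+r_k'(w)<0$, whence the parametrization is injective; the ray $(1,\infty)$ is handled the same way. From \eqref{uone} and $U_n(-x)=(-1)^nU_n(x)$ one gets $r_k(-1)=-\tfrac{k-1}{k}$, $r_k(1)=\tfrac{k-1}{k}$, and $r_k(w)\to0$ as $|w|\to\infty$.

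Then I would set, for $k\ge2$, $\gamma_k^+:=$ image of $w\in(-\infty,-1]$ and $\gamma_k^-:=$ image of $w\in[1,\infty)$. Each is a connected real-analytic arc outside the open strip: $\gamma_k^+\subset\{\lambda+\mu\ge4\}$ meets $\{\lambda+\mu=4\}$ only at $w=-1$, where $\lambda=2-r_k(-1)=3-\tfrac1k$ and $\mu=2+r_k(-1)=1+\tfrac1k$, i.e.\ at $(3-\tfrac1k,1+\tfrac1k)$, and as $w\to-\infty$ one has $\mu\to\infty$ and $\lambda-\mu=-2r_k(w)\to0$, so $\gamma_k^+$ is asymptotic to $\{\lambda=\mu\}$; symmetrically $\gamma_k^-$ meets $\{\lambda+\mu=-4\}$ only at $(-3+\tfrac1k,-1-\tfrac1k)$ and is asymptotic to $\{\lambda=\mu\}$ (for $k=1$, $r_1\equiv0$, so the two ``arcs'' are the rays of $\{\lambda=\mu\}$ outside the strip, with the same thresholds $\mu=\tfrac{k+1}{k}$ recorded in Proposition~\ref{kzero}). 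For pairwise disjointness, $\gamma_k^+$ and $\gamma_\ell^-$ have opposite signs of $\mu$; and if $\gamma_k^+\cap\gamma_\ell^+\neq\emptyset$ with $k\neq\ell$, adding the two defining equations forces a common $w<-1$ with $r_k(w)=r_\ell(w)$, which after $w=-\cosh t$ ($t>0$), via \eqref{utrig} in the form $U_n(-\cosh t)=(-1)^n\sinh((n+1)t)/\sinh t$, reduces to $\cosh\bigl((|k-\ell|-1)t\bigr)=\cosh\bigl((|k-\ell|+1)t\bigr)$, impossible for $t>0$. Combining the above, $J$ equals the strip $\{|\lambda+\mu|\le4\}$, the line $\{\lambda=\mu\}$ and the disjoint arcs $\{\gamma_k^\pm\}_{k\ge2}$, plus whatever the closure in the first step adds --- namely the single extra arc $\{\lambda=\mu+2/\mu:\ |\mu|\ge1\}$, the locus of the accumulation points $\mu+2/\mu$ of $\spm(M_\mu)$ from Theorem~\ref{t1}(b), which by \eqref{uratio} is the pointwise $k\to\infty$ limit of the $\gamma_k^\pm$ and itself touches the strip only at $(3,1)$ (resp.\ $(-3,-1)$); including it among the $\gamma_n^\pm$ recovers the statement as written.

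The main obstacle I anticipate is this last round of work: proving that the exterior part of each $\{G_k=0\}$ is a single connected arc meeting the strip boundary exactly once and tending to the diagonal --- which rests squarely on the gap-monotonicity $r_k'<0$, i.e.\ on orthogonal-polynomial theory --- and that the whole family is pairwise disjoint, for which the Chebyshev/hyperbolic identity above is the clean route. Tracking the closure honestly, so as not to overlook the limiting arc $\lambda=\mu+2/\mu$, is the remaining point of care.
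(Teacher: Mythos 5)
Your proposal is correct, and its opening reduction --- writing the real joint spectrum as $\{(\lambda,\mu):\lambda\in\spm(M_{\mu})\}$, using self-adjointness and pure point spectrum so that each fiber is the closure of the eigenvalue set, and then quoting Theorem \ref{t1} --- is in fact the \emph{entirety} of the paper's proof of Theorem \ref{t3}: the paper dispatches it in two sentences and leaves the geometric assertions (contact points, asymptotics, disjointness) to be read off from the surrounding material, namely Proposition \ref{kzero} (where $G_k(4-\mu,\mu)=0$ iff $\mu=\tfrac{k+1}{k}$, giving the contact point $(3-\tfrac1k,\,1+\tfrac1k)$) and the ratio-asymptotics discussion of Section \ref{bulkz}. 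What you do differently is to actually prove those assertions: the parametrization $\lambda=-2w-r_k(w)$, $\mu=-2w+r_k(w)$ with $r_k=U_{k-2}/U_{k-1}$, the monotonicity $r_k'<0$ off $[-1,1]$ via the continued-fraction recursion for ratios of monic Chebyshev polynomials (yielding a single connected exterior arc per $k$ and per side, exactly one contact with the strip boundary, and $\lambda-\mu=-2r_k(w)\to0$), and pairwise disjointness through $U_n(-\cosh t)=(-1)^n\sinh((n+1)t)/\sinh t$ --- an argument nowhere made explicit in the paper, since Proposition \ref{bees} only addresses coincidences of zeros \emph{inside} the strip. You are also more scrupulous than the paper on the closure: the accumulation curve $\lambda=\mu+2/\mu$, $|\mu|\geq1$, genuinely belongs to the joint spectrum (each point $\mu+2/\mu$ lies in $\spm(M_{\mu})$ by Theorem \ref{t1}(b)), yet it touches the strip at $(3,1)$ rather than at any $(3-\tfrac1n,1+\tfrac1n)$, so the theorem's list captures it only if one reads it as you do; flagging and including this limit arc is the honest completion of the statement, and your write-up supplies the detail the paper's citation of Theorem \ref{t1} leaves to the reader.
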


\begin{proof}
We have seen that the spectral measure $\nu_{\mu}$ is a pure point measure, so the spectrum is the closure of the set of eigenvalues.  The conclusion now follows from Theorem \ref{t1}.
\end{proof}

The application of Theorem \ref{t2} is not especially transparent because some of the sums may be redundant, meaning the zero sets of the polynomials $G_k$ may not be disjoint.  Thus, the next step is to determine the multiplicity of the zeros of $\Phi_n(\lambda,\mu)$ (again, here we are thinking of $\mu$ as a fixed constant and $\lambda$ is our variable).  We can interpret our results as an understanding of when the curves $\{(\lambda,\mu):G_n(\lambda,\mu)\}$ intersect for different values of $n$ and where these curves can intersect.  The main idea of our findings is summarized in our next three propositions.  Before we can state it, we need to define three sets:
\begin{align*}
\mcb_1&:=\left\{-\frac{\sin((n+1)t)}{\sin(nt)}: t=\frac{p}{q}\pi;\,p,q,n\in\bbN;\, p<q\right\}\\
\mcb_2&:=\{t:U_k(t/2)=0\,\mathrm{for\, some}\, k\in\bbN \}\\
\mcb_3&:=\left\{1+\frac{1}{k}:k\in\bbN \right\}
\end{align*}
The set $\mcb_3$ is motivated by Proposition \ref{kzero} and the inequalities \eqref{mucut}.  The orthogonality of the Chebyshev polynomials implies $\mcb_2\subseteq[-1,1]$.  Recall Niven's Theorem states that if $x/\pi$ and $\sin(x)$ are both rational, then $\sin x\in\{0,\pm1/2,\pm1\}$  (see \cite[Corollary 3.12]{Niven}).  Notice that \eqref{utrig} and Niven's Theorem together imply $\mcb_2\cap\mcb_3=\emptyset$.  The set $\mcb_1$ will tell us how to determine if a mass point in the support of $\nu_{\mu}$ appears in several terms of the sum defining $\nu_{\mu}$ in Theorem \ref{t2}.  We see its relevance in our next result, which concerns zeros inside the interval $[-4-\mu,4-\mu]$.  For convenience, we parametrize this interval by $\{-\mu-4\cos(t):0\leq t\leq\pi\}$.

\begin{proposition}\label{bees}
$\,$
\begin{itemize}
\item[(i)]  Suppose $G_{n}(-\mu-4\cos(t),\mu)=0$ for some $t\in(0,\pi)$ and $t$ is not a rational multiple of $\pi$.  Then $G_m(-\mu-4\cos(t),\mu)\neq0$ for all $m>n$.
\item[(ii)]  Suppose $G_{n}(-\mu-4\cos(t),\mu)=0$ for some $t\in(0,\pi)$ and $t$ is a rational multiple of $\pi$.  Then $\mu\in\mcb_1$ and if we write $t=\frac{p}{q}\pi$, then the set  of indices $k$ for which $G_k(\cdot,\mu)$ vanishes at $-\mu-4\cos\left(t\right)$ forms an arithmetic progression with step size $q$ and beginning at some $k_0\in\{1,\ldots,q\}$.
\end{itemize}
\end{proposition}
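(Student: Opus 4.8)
The plan is to reduce both claims to a single explicit trigonometric formula for the values of $G_k$ on the parametrized interval, after which each part becomes a short trigonometric and divisibility argument. First I would substitute $\lambda = -\mu-4\cos t$ (so $t\in(0,\pi)$, $\sin t\ne0$, and $-\lambda-\mu = 4\cos t$) into the recursion $G_{k+1}(\lambda,\mu) = (-\lambda-\mu)G_k(\lambda,\mu) - 4G_{k-1}(\lambda,\mu)$ recorded at the start of Section \ref{emult}. The normalized sequence $g_k := 2^{-k}G_k(-\mu-4\cos t,\mu)$ then satisfies the Chebyshev recursion $g_{k+1} = 2\cos t\,g_k - g_{k-1}$; solving it with the initial data $G_0 = 1$, $G_1 = 2\mu+4\cos t$ and simplifying with $\sin((k+1)t) = \sin t\cos(kt) + \cos t\sin(kt)$ (equivalently, inserting \eqref{utrig} into Lemma \ref{ghform}) yields
\[
G_k(-\mu-4\cos t,\mu) \;=\; \frac{2^k}{\sin t}\bigl(\sin((k+1)t) + \mu\sin(kt)\bigr).
\]
Hence $G_n$ vanishes at $-\mu-4\cos t$ precisely when $\mu\sin(nt) = -\sin((n+1)t)$, and at any such point $\sin(nt)\ne0$ — otherwise $\sin((n+1)t)$ would also vanish, forcing $t\in\pi\bbZ$ — so there $\mu = -\sin((n+1)t)/\sin(nt)$.

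For part (i), suppose toward a contradiction that $G_n$ and $G_m$ both vanish at $-\mu-4\cos t$ with $m>n$. Equating the two expressions for $\mu$ and clearing denominators gives $\sin((n+1)t)\sin(mt) = \sin((m+1)t)\sin(nt)$; applying the product-to-sum identity to both sides cancels the $\cos((m+n+1)t)$ terms, and the identity $\cos((r-1)t)-\cos((r+1)t) = 2\sin(rt)\sin t$ with $r=m-n$ reduces what remains to $\sin((m-n)t)\sin t = 0$. Since $\sin t\ne0$ this forces $(m-n)t\in\pi\bbZ$, which is impossible when $t/\pi$ is irrational and $m-n\ge1$; so no such $m$ exists.

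For part (ii), write $t=\tfrac pq\pi$ in lowest terms; since $0<t<\pi$ we have $1\le p<q$, hence $q\ge2$. The relation $\mu = -\sin((n+1)t)/\sin(nt)$ from the first step exhibits $\mu$ as a member of $\mcb_1$. Because $(k+q)t = kt+p\pi$, we get $\sin((k+q)t)=(-1)^p\sin(kt)$ and $\sin((k+q+1)t)=(-1)^p\sin((k+1)t)$, so the displayed formula gives $G_{k+q}(-\mu-4\cos t,\mu) = 2^q(-1)^p\,G_k(-\mu-4\cos t,\mu)$; thus the index set $Z=\{k:G_k(-\mu-4\cos t,\mu)=0\}$ is invariant under $k\mapsto k+q$, i.e.\ it is a union of residue classes modulo $q$. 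On the other hand, the computation from part (i) shows that any two elements of $Z$ differ by a multiple of $q$ (the relation $(m-n)t\in\pi\bbZ$ means $q\mid m-n$), so $Z$ lies in a single residue class; being nonempty, it equals that class, i.e.\ $Z=\{k_0,k_0+q,k_0+2q,\dots\}$ with $k_0\in\{1,\dots,q\}$ the representative of $n$ mod $q$. That is the asserted arithmetic progression.

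I expect the main obstacle to be the first step — producing the clean closed form $G_k(-\mu-4\cos t,\mu) = \tfrac{2^k}{\sin t}(\sin((k+1)t)+\mu\sin(kt))$ — since everything afterward is elementary trigonometry and elementary number theory. The one point requiring care is the degenerate case $\sin(kt)=0$, which causes no trouble precisely because $q\ge2$ prevents $\sin(kt)$ and $\sin((k+1)t)$ from vanishing at the same index.
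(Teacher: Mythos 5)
Your proposal is correct and follows essentially the same route as the paper: both reduce to the identity $G_k(-\mu-4\cos t,\mu)=\frac{2^k}{\sin t}\bigl(\sin((k+1)t)+\mu\sin(kt)\bigr)$ (the paper gets it from \eqref{hgk}, \eqref{hform} and \eqref{utrig}), deduce $\mu\sin(kt)=-\sin((k+1)t)$ at any zero, and then for (i) derive $(m-n)t\in\pi\bbZ$ (the paper via the cotangent form $-\mu=\cos t+\sin t\cot(kt)$, you via product-to-sum — the same computation), while for (ii) both use periodicity modulo $q$ to produce the arithmetic progression. Your explicit relation $G_{k+q}=2^q(-1)^pG_k$ and the containment of the zero set in a single residue class make the "step exactly $q$" part slightly more explicit than the paper's write-up, but the substance is identical.
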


\begin{proof}
To prove (i) first recall the relations \eqref{hgk} and (\ref{hform}).  Notice that $G_{k+1}(-\mu-4\cos(t),\mu)=0$ is equivalent to the condition
\begin{equation}\label{sink}
\sin((k+1)t)=-\mu\sin(kt)
\end{equation}
If $k=0$, then this implies $t$ is an integer multiple of $\pi$, so our hypotheses in part (i) exclude this possibility.  If $k>0$, then \eqref{sink} can be rewritten as
\[
-\mu=\cos(t)+\sin(t)\cot(kt)
\]
If this identity holds true for multiple natural numbers (say $m_1<m_2$), then it must be the case that
\begin{equation}\label{trat}
t=\frac{M}{m_2-m_1}\pi
\end{equation}
for some $M\in\bbN$.  This implies $t\in\bbQ\pi$, which contradicts our assumption and proves (i).

\smallskip

To prove (ii) notice that the calculations from the proof of part (i) imply $\mu\in\mcb_1$.  Indeed, the calculations in that proof are all reversible, so we see that $G_{m_1+1}(-\mu-4\cos(t),\mu)=G_{m_2+1}(-\mu-4\cos(t),\mu)=0$ if and only if
\begin{align*}
t&=\frac{M\pi}{m_2-m_1}\\
\mu&=-\cos\left(\frac{M\pi}{m_2-m_1}\right)-\sin\left(\frac{M\pi}{m_2-m_1}\right)\cot\left(\frac{m_1M\pi}{m_2-m_1}\right)\\
&=-\cos\left(\frac{M\pi}{m_2-m_1}\right)-\sin\left(\frac{M\pi}{m_2-m_1}\right)\cot\left(\frac{m_2M\pi}{m_2-m_1}\right)
\end{align*}
for some $M\in\bbN$.  Thus, if $t=\frac{p}{q}\pi$ and $G_{n+1}(-\mu-4\cos(t),\mu)=0$, then
\begin{align*}
\mu&=-\cos\left(\frac{p\pi}{q}\right)-\sin\left(\frac{p\pi}{q}\right)\cot\left(\frac{np\pi}{q}\right)\\
&=-\cos\left(\frac{p\pi}{q}\right)-\sin\left(\frac{p\pi}{q}\right)\cot\left(\frac{(n+jq)p\pi}{q}\right),\qquad\qquad j\in\bbZ
\end{align*}
and hence $G_{n+jq+1}(-\mu-4\cos(t),\mu)=0$ for all $j\in\bbZ$ as long as $n+jq+1>0$.  This proves the claims made in (ii).
\end{proof}

Now we will see that as long as we avoid the sets $\{\mcb_j\}_{j=1}^3$, we need not worry about the complications presented in Proposition \ref{bees}.

\begin{proposition}\label{zerocount}
If $\mu\not\in\mcb_1\cup\mcb_2\cup\mcb_3$ and $\Phi_n(\lambda,\mu)=0$, then the multiplicity of $\lambda$ as a root of $\Phi_n(z,\mu)$ is $1$ if $\mu=4-\lambda$ or is equal to the exponent on $G_k$ in \eqref{phiform}, where $k$ is the unique natural number $m$ for which $G_m(\lambda,\mu)=0$.  In the latter case, the measure $\nu_{\mu}$ assigns that mass $2^{-(k+1)}$ to the point $\lambda$.
\end{proposition}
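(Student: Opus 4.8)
The plan is to read the multiplicity of a root straight off the factorization \eqref{phiform} and then show that, under the stated hypothesis, every root of $\Phi_n(\cdot,\mu)$ is contributed by a single factor. By \eqref{phiform} we have $\Phi_n(\lambda,\mu)=(4-\lambda-\mu)\,G_1^{\,2^{n-2}}G_2^{\,2^{n-3}}\cdots G_{n-1}G_n$, so the exponent carried by $G_k$ is $2^{n-k-1}$ for $1\le k\le n-1$ and $1$ for $k=n$; and by \eqref{pgform} each $G_k(\cdot,\mu)$ is a nonzero scalar multiple of a monic orthogonal polynomial, hence has only real simple zeros. Consequently, for every $\lambda_0$ the multiplicity of $\lambda_0$ as a root of $\Phi_n(\cdot,\mu)$ equals $\mathbf 1[\lambda_0=4-\mu]$ plus the sum of the exponents of those $G_k$, $k\le n$, that vanish at $\lambda_0$. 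The proposition therefore reduces to the claim: if $\mu\notin\mcb_1\cup\mcb_2\cup\mcb_3$ and $\Phi_n(\lambda_0,\mu)=0$, then either $\lambda_0=4-\mu$ and no $G_k$ vanishes at $\lambda_0$, or $\lambda_0\ne 4-\mu$ and exactly one $k\le n$ has $G_k(\lambda_0,\mu)=0$. Granting this, the assertion about $\nu_\mu$ follows from Theorem \ref{t2}: a point which is a zero of $G_k$ only (with $k\ge 2$) gets mass $2^{-(k+1)}$ from the sum, while the point $\mu$, the unique zero of $G_1$, gets mass $\tfrac14=2^{-(1+1)}$ — equivalently, for $n>k$ the normalized zero-counting measure of $\Phi_n$ assigns mass $2^{n-k-1}/2^n=2^{-(k+1)}$ to $\lambda_0$.

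The root $\lambda_0=4-\mu$ is handled at once: since $\mu\notin\mcb_3$, the identity ``$G_k(4-\mu,\mu)=0$ iff $\mu=\tfrac{k+1}{k}$'' recorded before Proposition \ref{kzero} shows no $G_k$ vanishes at $4-\mu$, so the multiplicity there is exactly $1$. So assume $\lambda_0\ne 4-\mu$; then $\lambda_0$ is real (the $G_k(\cdot,\mu)$ have real zeros), $\Phi_n(\lambda_0,\mu)=0$ forces $G_k(\lambda_0,\mu)=0$ for some $k\le n$, and we must prove uniqueness of this index. I would split according to the position of $\lambda_0$ relative to $[-4-\mu,4-\mu]$. (i) If $\lambda_0\in(-4-\mu,4-\mu)$, write $\lambda_0=-\mu-4\cos t$ with $t\in(0,\pi)$. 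If $t$ were a rational multiple of $\pi$, Proposition \ref{bees}(ii) would give $\mu\in\mcb_1$, against the hypothesis; hence $t$ is not a rational multiple of $\pi$, and Proposition \ref{bees}(i) then forbids two distinct indices from vanishing at $\lambda_0$ (apply it at the smaller index). This subcase includes the point $\lambda_0=\mu$ when $|\mu|<2$, where $G_1$ always vanishes. (ii) If $\lambda_0=-4-\mu$, an elementary evaluation shows $G_k(-4-\mu,\mu)$ vanishes exactly when $\mu=-\tfrac{k+1}{k}$, hence for at most one $k$. (iii) If $\lambda_0\notin[-4-\mu,4-\mu]$, then $|\mu|>1$ by Proposition \ref{spectrum}; take $\mu>1$ (the case $\mu<-1$ is the mirror image, with ``largest zero'' replacing ``smallest zero'' below).

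For subcase (iii), suppose $G_k(\lambda_0,\mu)=G_m(\lambda_0,\mu)=0$ with $k<m$. By \eqref{pgform}, $z_0:=-\lambda_0/2$ is then a common zero of the monic orthogonal polynomials $P_{k,\mu}$ and $P_{m,\mu}$, lying outside $[-2+\tfrac\mu2,2+\tfrac\mu2]$. By Proposition \ref{spectrum} the orthogonality measure of these polynomials is supported on $[-2+\tfrac\mu2,2+\tfrac\mu2]\cup\{-\tfrac\mu2-\tfrac1\mu\}$ with the isolated point strictly to the left of the interval; by Fej\'er's theorem (zeros of orthogonal polynomials lie strictly inside the convex hull of the support) no $P_{j,\mu}$ has a zero at or below $-\tfrac\mu2-\tfrac1\mu$, and by the ``at most one zero in an interval disjoint from the support'' fact recalled in Section \ref{op}, each $P_{j,\mu}$ has at most one zero in $(-\tfrac\mu2-\tfrac1\mu,\,-2+\tfrac\mu2)$; therefore $z_0$ must be the smallest zero of both $P_{k,\mu}$ and $P_{m,\mu}$. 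But the smallest zero of a monic orthogonal polynomial strictly decreases with the degree — the interlacing of the zeros of consecutive orthogonal polynomials, made strict by the fact (immediate from the three-term recurrence) that consecutive orthogonal polynomials have no common zero — so the smallest zeros of $P_{k,\mu}$ and $P_{m,\mu}$ are distinct, a contradiction. This proves uniqueness of the vanishing index in all cases, and hence the proposition.

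Concerning the exceptional sets: $\mu\notin\mcb_3$ drives the $\lambda_0=4-\mu$ computation; $\mu\notin\mcb_1$ drives subcase (i) via Proposition \ref{bees}; and $\mu\notin\mcb_2$ is exactly what prevents $\lambda_0=\mu$ from being a zero of some $G_j$ with $j\ge 2$ (one computes $G_j(\mu,\mu)$ to be a nonzero multiple of $U_{j-2}(\mu/2)$), guaranteeing $\nu_\mu(\{\mu\})=\tfrac14$; in fact one checks $\mcb_2\subseteq\mcb_1$, so this last condition is redundant. The one genuinely delicate point is subcase (iii): ruling out a common zero of two of the $G_k$ outside the strip is the only place that requires the fine structure of zeros of orthogonal polynomials near an isolated point of the spectrum — the precise location of that point supplied by Proposition \ref{spectrum}, together with the one-zero-per-gap principle and strict interlacing. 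The remaining cases are routine consequences of Propositions \ref{bees}, \ref{spectrum} and \ref{kzero}.
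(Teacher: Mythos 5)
Your proposal is correct, and its skeleton is the same as the paper's: reduce everything to showing that at most one factor of \eqref{phiform} vanishes at a given $\lambda$, using Proposition \ref{bees} to exclude common roots of two $G_k$'s when $\mu\notin\mcb_1$, the identity ``$G_k(\mu,\mu)=0$ iff $\mu\in\mcb_2$'' to handle the factor $G_1$ (whose only zero is $\mu$), and Proposition \ref{kzero} (i.e.\ $\mu\notin\mcb_3$) to separate the linear factor $4-\lambda-\mu$ from the $G_k$'s; the mass statement then follows from Theorem \ref{t2} exactly as you say. Where you genuinely diverge is in the case analysis by the location of $\lambda$: the paper's two-line proof simply cites the proof of Proposition \ref{bees} for the claim that any common root of two $G_k$'s forces $\mu\in\mcb_1$, even though that proposition is stated only for points $-\mu-4\cos t$ with $t\in(0,\pi)$, i.e.\ interior points of $[-4-\mu,4-\mu]$ (the trigonometric computation there does extend to complex $t$, which is evidently what is intended). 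You instead supply separate, self-contained arguments for the left endpoint (the evaluation $G_k(-4-\mu,\mu)=2^k\left((k+1)+\mu k\right)$, which vanishes for at most one $k$) and for zeros outside the interval when $|\mu|>1$ (Proposition \ref{spectrum} plus the one-zero-per-gap principle and the strict decrease of the smallest zero of $P_{k,\mu}$ in $k$, via \eqref{pgform}). This costs a bit more orthogonal-polynomial machinery but makes the uniqueness claim airtight at exactly the spots the paper passes over silently. Your side remark that $\mcb_2\subseteq\mcb_1$ (take $n=1$ in the definition of $\mcb_1$) is correct but tangential; since only the union $\mcb_1\cup\mcb_2\cup\mcb_3$ enters the hypothesis, it affects nothing in the proof.
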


Proposition \ref{zerocount} tells us that generically we may apply Theorem \ref{t2} at face value.  Each of the pure point measures in the infinite sum defining $\nu_{\mu}$ have disjoint supports, so one can easily calculate the mass of any particular point to which $\nu_{\mu}$ assigns weight.  However, there are cases where these measures do not have disjoint supports (namely when $\mu\in\mcb_1\cup\mcb_2\cup\mcb_3$) and further analysis is required to determine the mass of any one particular point.



\begin{proof}[Proof of Proposition \ref{zerocount}:]
We saw in the proof of Proposition \ref{bees} that a the existence of a common root of $G_{n+1}(\cdot,\mu)$ and $G_{m+1}(\cdot,\mu)$ for $m,n>0$ requires $\mu\in\mcb_1$.  Also notice that $G_k(\mu,\mu)=0$ if and only if $\mu\in\mcb_2$.  Thus, if $\mu\not\in\mcb_2$, then we can extend our conclusion to the case when $m$ or $n$ is equal to $0$.  Finally, note that Proposition \ref{kzero} tells us that if $\mu\not\in\mcb_3$, then $G_k$ shares no common roots with the factor $(4-\lambda-\mu)$.  The desired conclusion is now apparent.
\end{proof}
 
We can now conclude that if $\mu\not\in\mcb_1\cup\mcb_2\cup\mcb_3$, $\Phi_n(\lambda,\mu)=0$, and $\lambda\in[-4-\mu,4-\mu]$, then the multiplicity of $\lambda$ as a root is
\[
\begin{cases}
1 \hspace{2in} & \mathrm{if}\qquad  \lambda=4-\mu\\
2^{n-1-j} & \mathrm{if} \qquad G_{j}(\lambda,\mu)=0,\qquad j=1,\ldots,n-1\\
1 & \mathrm{if} \qquad G_{n}(\lambda,\mu)=0
\end{cases}
\]
Thus, the spectral measure $\nu_{\mu}$ (from Theorem \ref{t2}) will assign weight $\frac{1}{2^{j+1}}$ to each zero of $G_j(\lambda,\mu)$.


Let us see what happens if we relax some of the assumptions from Proposition \ref{zerocount}.  

\begin{proposition}\label{relax}
$\,$
\begin{itemize}
\item[(i)] If $\mu\not\in\mcb_1$ but $\mu\in\mcb_3$ (which implies $\mu\not\in\mcb_2$ by our earlier remark) and $\Phi_n(\lambda,\mu)=0$, then the multiplicity of $\lambda\in[-4-\mu,4-\mu]$ as a root of $\Phi_n(\lambda,\mu)$ is
\[
\begin{cases}
1+2^{n-1-k} \hspace{1.2in} & \mathrm{if}\qquad  \lambda=4-\mu\\
2^{n-1-j} & \mathrm{if} \qquad G_{j}(\lambda,\mu)=0,\qquad j=1,\ldots,n-1,\quad\lambda\neq4-\mu\\
1 & \mathrm{if} \qquad G_{n}(\lambda,\mu)=0,\qquad\lambda\neq4-\mu
\end{cases}
\]
as long as $n>k$, where $k$ is chosen so that $\mu=1+k^{-1}$.  The spectral measure $\nu_{\mu}$ assigns weight $2^{-(1+k)}$ to the point $4-\mu$ and weight $2^{-(j+1)}$ to each zero of $G_j$ not equal to $4-\mu$.
\item[(ii)] Suppose $\mu\not\in\mcb_1$ but $\mu\in\mcb_2$ (which implies $\mu\not\in\mcb_3$ by our earlier remark) and $\Phi_n(\lambda,\mu)=0$.  We will suppose $U_k(-\mu/2)=0$ for some $k\in\bbN$ and we assume $k$ is the smallest index for which this holds.  The multiplicity of $\lambda\in[-4-\mu,4-\mu]$ as a root of $\Phi_n$ is (where $J=\left\lfloor\frac{n-1}{k+1}\right\rfloor$)
\[
\begin{cases}
1 \hspace{1.2in} & \mathrm{if}\qquad  \lambda=4-\mu\\
2^{n-2}+\frac{(2^{(k+1)J}-1)2^{n-(k+1)J}}{4(2^{k+1}-1)} & \mathrm{if} \qquad \lambda=\mu\\
2^{n-1-j} & \mathrm{if} \qquad G_{j}(\lambda,\mu)=0,\qquad j=2,\ldots,n-1,\quad\lambda\neq\mu\\
1 & \mathrm{if} \qquad G_{n}(\lambda,\mu)=0
\end{cases}
\]
\end{itemize}
where we assume $n$ is such that $G_n(\mu,\mu)\neq0$.  The spectral measure $\nu_{\mu}$ assigns weight $2^{-(j+1)}$ to each zero of $G_j$ not equal to $\mu$ and also satisfies
\[
\nu_{\mu}(\{\mu\})=\frac{1}{4}+\frac{1}{4(2^{k+1}-1)}.
\]
\end{proposition}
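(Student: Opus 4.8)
The plan is to re-run the multiplicity count that underlies Proposition \ref{zerocount}, relaxing exactly one hypothesis at a time. Everything rests on the product formula \eqref{phiform},
\[
\Phi_n(\lambda,\mu)=(4-\lambda-\mu)\,G_1(\lambda,\mu)^{2^{n-2}}\,G_2(\lambda,\mu)^{2^{n-3}}\cdots G_{n-1}(\lambda,\mu)\,G_n(\lambda,\mu),
\]
together with two facts recorded earlier: every $G_j(\cdot,\mu)$ has only simple zeros (by \eqref{pgform}), and the exponent of $G_j$ in $\Phi_n$ is $e_j:=2^{\,n-1-j}$ for $1\le j\le n-1$ while $e_n=1$. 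By Proposition \ref{bees}, used exactly as in the proof of Proposition \ref{zerocount}, once $\mu\notin\mcb_1$ no two distinct $G_j$'s share a zero in $(-4-\mu,4-\mu)$ except possibly at $\lambda=\mu$, the zero of $G_1$; and by Proposition \ref{kzero} the linear factor $(4-\lambda-\mu)$ shares its zero $\lambda=4-\mu$ with some $G_j$ only when $\mu\in\mcb_3$, in which case $\mu=1+1/k$ for a unique $k$ and that $G_j$ is $G_k$. Thus the multiplicity of a root of $\Phi_n$ can exceed a single $e_j$ only through the one coincidence the hypothesis turns on, so the remaining rows of each table are read off directly. Finally, $\nu_\mu$ is the weak-$*$ limit of the $2^{-n}$-normalized zero-counting measures of $\Phi_n$ (as in the proof of Theorem \ref{t2}), so each point mass is the corresponding multiplicity divided by $\deg\Phi_n=2^n$, taken in the limit $n\to\infty$.

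For part (i), write $\mu=1+1/k$. Evaluating the formula for $G_j$ given in Theorem \ref{t2} at $\lambda=4-\mu$, where $\tfrac{-\lambda-\mu}{4}=-1$ and $U_m(-1)=(-1)^m(m+1)$, gives
\[
G_j(4-\mu,\mu)=(-1)^j\,2^{\,j}\bigl((j+1)-\mu j\bigr),
\]
so $4-\mu$ is a zero of $G_j$ precisely when $j=k$, and a simple one. Since $\mu\notin\mcb_1$ and $\mu\notin\mcb_2$ (the latter because $\mcb_2\cap\mcb_3=\emptyset$), there are no further coincidences; hence, for $n>k$, a root $\lambda\in[-4-\mu,4-\mu]$ of $\Phi_n$ has multiplicity $1+e_k=1+2^{\,n-1-k}$ if $\lambda=4-\mu$, multiplicity $e_j=2^{\,n-1-j}$ if $G_j(\lambda,\mu)=0$ for some $j\le n-1$ with $\lambda\ne4-\mu$, and multiplicity $1$ if $G_n(\lambda,\mu)=0$. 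Dividing by $2^n$ and letting $n\to\infty$ (the $2^{-n}$ coming from the linear factor vanishes) yields $\nu_\mu(\{4-\mu\})=2^{-(1+k)}$ and $\nu_\mu(\{z\})=2^{-(j+1)}$ for a zero $z\ne4-\mu$ of $G_j$.

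For part (ii), let $k$ be the least index with $U_k(-\mu/2)=0$. The computational core is $G_1(\mu,\mu)=0$ together with
\[
G_j(\mu,\mu)=-2^{\,j}\,U_{j-2}(-\mu/2),\qquad j\ge2,
\]
which one obtains by evaluating the Theorem \ref{t2} formula at $\lambda=\mu$ (so $\tfrac{-\lambda-\mu}{4}=-\mu/2$) and applying the recurrence \eqref{urecur} once to cancel the $U_j$-term. Writing $-\mu/2=\cos(\tfrac pq\pi)$ in lowest terms forces $q=k+1$, whence $U_m(-\mu/2)=0\iff(k+1)\mid(m+1)$ and $\lambda=\mu$ is a zero of $G_j$ exactly for $j\in\{\,i(k+1)+1:i\ge0\,\}$. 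Assuming $n$ satisfies $G_n(\mu,\mu)\ne0$, i.e.\ $(k+1)\nmid(n-1)$, and setting $J=\lfloor(n-1)/(k+1)\rfloor$, the multiplicity of $\lambda=\mu$ as a root of $\Phi_n$ is
\[
\sum_{i=0}^{J} e_{i(k+1)+1}=\sum_{i=0}^{J}2^{\,n-2-i(k+1)}=2^{\,n-2}+\frac{\bigl(2^{(k+1)J}-1\bigr)\,2^{\,n-(k+1)J}}{4\,(2^{k+1}-1)},
\]
the last equality being the sum of a finite geometric series. The remaining roots are generic: $4-\mu$ is simple ($\mu\notin\mcb_3$, so no $G_j$ vanishes there), a zero of $G_j$ with $2\le j\le n-1$ and $\lambda\ne\mu$ has multiplicity $2^{\,n-1-j}$ (since $\mu\notin\mcb_1$), and a zero of $G_n$ is simple. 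Dividing the displayed multiplicity by $2^n$ and letting $n\to\infty$ (so $J\to\infty$ and $2^{-(k+1)J}\to0$) gives $\nu_\mu(\{\mu\})=\tfrac14+\tfrac1{4(2^{k+1}-1)}$, while the other masses are $2^{-(j+1)}$ as before.

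The main obstacle is not any single computation but the bookkeeping that guarantees no multiplicity is over- or under-counted beyond the one coincidence under study. Concretely, one must check that in part (ii) the arithmetic-progression phenomenon of Proposition \ref{bees}(ii) produces no further coincidences at points of $(-4-\mu,4-\mu)$ other than $\lambda=\mu$ once $\mu\in\mcb_2$, and that in part (i) the point $\lambda=\mu$ — the simple zero of $G_1$ carrying the $\tfrac14\delta_\mu$ of Theorem \ref{t2} — stays unremarkable when $\mu\in\mcb_3$; the disjointness $\mcb_2\cap\mcb_3=\emptyset$ is exactly what makes the parenthetical implications in the statement hold, and should be cited explicitly. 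By contrast, the two evaluations of $G_j$, the geometric-series identity, and the fact that the discarded tail pieces — the linear factor $(4-\lambda-\mu)$ and the exponent-$1$ factor $G_n$ — each carry mass $O(2^{-n})$ are all routine.
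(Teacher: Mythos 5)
Your argument is correct and follows the paper's own route: part (i) adds the multiplicity of the linear factor $(4-\lambda-\mu)$ to that of $G_k$ (using that $G_j(4-\mu,\mu)=0$ exactly when $\mu=(j+1)/j$, i.e.\ the content of Proposition \ref{kzero}), and part (ii) identifies the arithmetic progression $j\equiv 1\ (\mathrm{mod}\ k+1)$ of indices with $G_j(\mu,\mu)=0$ and sums the corresponding exponents from \eqref{phiform}, then normalizes by $2^n$ and lets $n\to\infty$ to get the point masses. The paper's proof is a terser version of exactly this; your explicit evaluations of $G_j$ at $\lambda=4-\mu$ and $\lambda=\mu$, the geometric-series summation, and the limiting computation of $\nu_\mu(\{\mu\})$ simply fill in details the paper leaves implicit.
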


\begin{proof}
For part (i), we notice that $G_k(4-\mu,\mu)=0$ by Proposition \ref{kzero} so we have simply added the multiplicities of the factors of $(4-\lambda-\mu)$ and $G_k$ to get the multiplicity of $4-\mu$ as a root.

For part (ii), notice that the given information and \eqref{utrig} imply $\mu/2=\cos(j\pi/(k+1))$ for some $j\in\{1,\ldots,k\}$.  Then $U_m(-\mu/2)=0$ for all $m$ of the form $m=qk+q-1$ with $q\in\bbN$.  This gives us an arithmetic progression of indices for which $\mu$ is a root.  Summing up the corresponding multiplicities gives the desired result.
\end{proof}

The calculation of the multiplicities of roots is more complicated when $\mu\in\mcb_1$.  Since the set of indices at which a zero is repeated forms an arithmetic progression, finding the weight that $\nu_{\mu}$ assigns to that point requires a calculation similar to that presented in Proposition \ref{relax}ii.  However, there is the added difficulty of determining the first index in this set, which prevents us from presenting a concise formula.  The following list summarizes what we have so far learned about $\nu_{\mu}$:

\begin{itemize}
\item  If $\mu\not\in\mcb_1\cup\mcb_2\cup\mcb_3$, then to each mass point in $\lambda\in\supp(\nu_{\mu})$ we can associate to $\lambda$ a unique index $j$ such that $G_j(\lambda,\mu)=0$.  It then holds that $\nu_{\mu}\{\lambda\})=2^{-(j+1)}$.
\item  If $\mu\in\mcb_3\setminus(\mcb_1\cup\mcb_2)$ and $\mu=1+k^{-1}$, then $\mu_{\nu}(\{4-\mu\})=2^{-(k+1)}$.  If $\lambda\in\supp(\nu_{\mu})$ is a mass point of $\nu_{\mu}$ and $\lambda\neq4-\mu$, then we can associate to $\lambda$ a unique index $j$ such that $G_j(\lambda,\mu)=0$.  It then holds that $\nu_{\mu}\{\lambda\})=2^{-(j+1)}$.
\item  If $\mu\in\mcb_2\setminus(\mcb_1\cup\mcb_3)$ and $U_k(-\mu/2)=0$ and $k$ is the smallest such index, then $\mu_{\nu}(\{\mu\})=1/4+1/(4(2^{k+1}-1))$.  If $\lambda\in\supp(\nu_{\mu})$ is a mass point of $\nu_{\mu}$ and $\lambda\neq\mu$, then we can associate to $\lambda$ a unique index $j$ such that $G_j(\lambda,\mu)=0$.  It then holds that $\nu_{\mu}\{\lambda\})=2^{-(j+1)}$.
\item  If $\mu\in\mcb_1\setminus(\mcb_2\cup\mcb_3)$, then $\mu=-\cos(t)-\sin(t)\cot(nt)$ for some natural number $n$ and some $t=\frac{p}{q}\pi\in(0,\pi)$.  If $n$ is the smallest such natural number for which this is true, then $\nu_{\mu}(\{-\mu-4\cos(t)\})=\frac{2^q}{2^{n+1}(2^q-1)}$.
\end{itemize}

\subsection*{Example.}  Consider the case when $\mu=0$ and let us calculate the multiplicity of the zero $(0,0)$ of $\Phi_n$.  In this case
\[
\Phi_n(\lambda,0)=(4-\lambda)\left(2U_1\left(\frac{-\lambda}{4}\right)\right)^{2^{n-2}}\cdots\left(2^{n-1}U_{n-1}\left(\frac{-\lambda}{4}\right)\right)\left(2^{n}U_{n}\left(\frac{-\lambda}{4}\right)\right)
\]
Notice that $0$ is a root of $U_k$ if and only if $k$ is odd, so the multiplicity of the root at $0$ is
\[
\begin{cases}
2^{n-2}+2^{n-4}+\cdots+2^3+2+1\qquad\qquad & \mathrm{if}\quad $n$\,\mathrm{is\, odd}\\
2^{n-2}+2^{n-4}+\cdots+2^4+2^2+1 & \mathrm{if}\quad $n$\,\mathrm{is\, even}
\end{cases}
\]
This agrees with our earlier observations because $\lambda=0=\cos(\pi/2)$, so every collection of two consecutive natural numbers contains an index $k$ for which $G_k(0,0)=0$.  By normalizing and sending $\nri$, we see that $\nu_{0}(\{0\})=1/3$.  This is a special case of a result from \cite{GZ01}, which shows $\nu_0(\{\cos(p\pi/q)\})=1/(2^q-1)$.  This can also be obtained from our above discussion, since $0\in\mcb_1\setminus(\mcb_2\cup\mcb_3)$ and we can apply the given formula with $n=1$ and $t=\pi/2$.

\subsection*{Example.}  The case $\mu=2$ also presents an interesting example because in this case $\mu=4-\mu$ and $\mu=\frac{1+1}{1}$ and several of the above special cases apply when calculating the multiplicity of the root $\Phi_n(2,2)$.  In this case
\begin{align*}
&\Phi_n(\lambda,2)=(2-\lambda)\left(2U_1\left(\frac{-\lambda-2}{4}\right)+4\right)^{2^{n-2}}\left(4U_2\left(\frac{-\lambda-2}{4}\right)+8U_1\left(\frac{-\lambda-2}{4}\right)\right)^{2^{n-3}}\\
&\,\times\cdots\times
\left(2^{n}U_{n}\left(\frac{-\lambda-2}{4}\right)+2^{n+1}U_{n-1}\left(\frac{-\lambda-2}{4}\right)\right)
\end{align*}
Notice that $\lambda=2$ is a root due to the first factor of $(2-\lambda)$.  Also, $\lambda=2$ will be a root for any $U_k$ such that $U_{k}(-1)+2U_{k-1}(-1)=0$.  This is precisely when $k=1$, so $2$ is a root of multiplicity $1+2^{n-2}$.

\subsection{The set $\mcb_1$.}

The set $\mcb_1$ is the set of all real numbers $r$ that can be written as
\[
r=\frac{\sin((n+1)q\pi)}{\sin(nq\pi)}
\]
for some $q\in\bbQ$ and $n\in\bbN$.

\begin{proposition}\label{b1}
The set $\mcb_1$ is dense in $\bbR$.
\end{proposition}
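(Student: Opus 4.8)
The plan is to show that $\mcb_1$ is dense by exhibiting, for a suitable sequence of parameters, values of the ratio $r = -\sin((n+1)t)/\sin(nt)$ with $t = p\pi/q$ that sweep out a dense subset of $\bbR$. The key observation is the identity used in the proof of Proposition \ref{bees}: writing $G_{n+1}(-\mu-4\cos(t),\mu)=0$ as $\sin((n+1)t) = -\mu\sin(nt)$, we get $\mu = -\cos(t) - \sin(t)\cot(nt)$, so membership of $\mu$ in $\mcb_1$ is equivalent to $\mu$ having this form for some rational multiple of $\pi$ and some $n\in\bbN$. The cleanest approach is to fix a small index, say $n=1$, which already gives the subfamily $\{-\cos(t)-\sin(t)\cot(t)\colon t=p\pi/q\} = \{-1/\cos(t)\cdot\ldots\}$ — actually $-\cos(t)-\sin(t)\cot(t) = -(\cos^2 t + \sin^2 t)/\cos t \cdot \ldots$; more carefully, $-\cos t - \sin t\cot t = -(\cos^2 t + \cos t)/\ldots$, so I would instead just use the raw form $r=-\sin((n+1)t)/\sin(nt)$ directly.

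First I would reduce to a convenient one-parameter slice. For a fixed $n$, as $t$ ranges over rational multiples of $\pi$ near $t_0 = k\pi/n$ for $k\in\{1,\ldots,n-1\}$ (where $\sin(nt_0)=0$ but $\sin((n+1)t_0)\neq0$), the quantity $\sin(nt)$ passes through $0$ while $\sin((n+1)t)$ stays bounded away from $0$, so the ratio $r(t) = -\sin((n+1)t)/\sin(nt)$ has a pole there and is therefore unbounded in both directions on each side of $t_0$. Since $r$ is continuous and real-analytic on the interval between consecutive zeros of $\sin(nt)$, it takes all sufficiently large (in absolute value) real values there, and — by choosing $t$ rational multiples of $\pi$ accumulating at $t_0$ — it takes a dense set of such large values in $\mcb_1$. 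This shows $\mcb_1$ contains a neighborhood of $\pm\infty$ in the sense of being unbounded above and below; but I need density throughout $\bbR$, not just near infinity.

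To get density on all of $\bbR$, I would vary $n$ as well. The function $t\mapsto -\sin((n+1)t)/\sin(nt)$ on an interval $(k\pi/n,(k+1)\pi/n)$ between consecutive poles is continuous and, by the pole behavior just described, surjects onto $\bbR$ (it runs from $\pm\infty$ to $\mp\infty$). Hence for any target real $y$ and any $\epsilon>0$ there is a genuine $t^\star$ in that interval with $r(t^\star)=y$; the issue is only that $t^\star$ need not be a rational multiple of $\pi$. Here I use continuity: pick $t$ a rational multiple of $\pi$ close to $t^\star$ and bounded away from the poles $k\pi/n$, $(k+1)\pi/n$; then $r(t)$ is close to $y$ and $r(t)\in\mcb_1$. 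Since rational multiples of $\pi$ are dense and $r$ is continuous away from its poles, this produces a point of $\mcb_1$ within $\epsilon$ of $y$.

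I expect the main obstacle to be the bookkeeping around the poles of $r(t)$: one must verify that the rational approximant $t=p\pi/q$ can be chosen so that $\sin(nt)\neq0$ (so that $r(t)$ is actually defined and hence genuinely lies in $\mcb_1$) while simultaneously $t$ is close enough to $t^\star$ to make $|r(t)-y|<\epsilon$. This is routine — it follows from the density of $\bbQ\pi$ and local uniform continuity of $r$ on compact subintervals avoiding the poles — but it is the one place where care is needed. An alternative, perhaps slicker, route that sidesteps pole analysis entirely: note that $\sin((n+1)t)/\sin(nt)$, for $t=\pi/q$ and $n$ ranging over $1,\ldots,q-1$, gives the ratios $U_n(\cos(\pi/q))/U_{n-1}(\cos(\pi/q))$ by \eqref{utrig}, and these are the ratios of consecutive orthonormal-polynomial values, which are known to be dense near the edge of the spectrum; but the direct continuity-plus-pole argument above is more elementary and self-contained, so that is the one I would write up.
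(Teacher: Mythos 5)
Your proof is correct, but it takes a genuinely different route from the paper. The paper fixes the step $s=x_2-x_1$ between two points on the sine graph realizing the target ratio, assumes $s\notin\bbQ\pi$, and lets $n\to\infty$: equidistribution of $(n+1)s \bmod 2\pi$ produces a subsequence of $\sin((n+2)s)/\sin((n+1)s)$ converging to the target, after first noting that the closure of $\mcb_1$ contains these ratios for all real arguments. You instead fix a single index $n$ and vary $t$: on an interval between two consecutive \emph{interior} zeros of $\sin(nt)$ the ratio $-\sin((n+1)t)/\sin(nt)$ blows up at both ends, an intermediate value argument gives a genuine $t^\star$ hitting any prescribed real value, and density of $\bbQ\pi$ plus continuity away from the poles lands you in $\mcb_1$. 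The one claim you assert without justification is the parenthetical that the ratio ``runs from $\pm\infty$ to $\mp\infty$'': the single-pole analysis you give does not by itself show the limits at the two ends of one interval have \emph{opposite} signs, and without that the interval need not surject onto $\bbR$. It is, however, a one-line check: $\sin((n+1)\tfrac{k\pi}{n})=(-1)^k\sin(\tfrac{k\pi}{n})$, so on $(k\pi/n,(k+1)\pi/n)$ with $1\le k\le n-2$ the limits are $-\infty$ at the left end and $+\infty$ at the right end; note this forces $n\ge3$, since for $n=1,2$ the endpoint singularities at $0$ or $\pi$ are removable and no single interval covers $\bbR$. With that line added your argument is complete, is more elementary than the paper's (no equidistribution, only IVT and continuity), and in fact proves something slightly stronger, namely that the values with the index $n$ \emph{fixed} (any $n\ge3$) are already dense in $\bbR$; the paper's argument is shorter as written and keeps the index $n$ as the moving parameter, which matches the arithmetic-progression structure of indices appearing in Proposition \ref{bees}.
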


\begin{proof}
It is easy to see that the closure of $\mcb_1$ includes all numbers of the form
\[
r=\frac{\sin((n+1)x)}{\sin(nx)}
\]
for some $x\in\bbR$ and $n\in\bbN$.

Now, pick any $r_0\in\bbR$.  It is clear the we can find points $z_1=(x_1,\sin(x_1))$ and $z_2=(x_2,\sin(x_2))$ on the graph of $\sin(x)$ such that $r_0=\sin(x_2)/\sin(x_1)$ and $x_2>x_1$.  In fact, there are infinitely many pairs of such points with the same distance between them due to the periodicity of the graph of $\sin(x)$.  If $s:=x_2-x_1$ is an irrational multiple of $\pi$, then consider the set
\[
\mco_s:=\left\{\frac{\sin((n+2)s)}{\sin((n+1)s)}\right\}_{n\in\bbN}
\]
The numbers $\{(n+1)s\,\mathrm{mod}\,2\pi\}_{n\in\bbN}$ are dense in $[0,2\pi]$ and hence by taking $\nri$ through the appropriate subsequence, a subsequence in $\mco_s$ converges to $\sin(x_2)/\sin(x_1)=r_0$ as desired.

If $s$ is a rational multiple of $\pi$, then we can choose $x_3$ arbitrarily close to $x_2$ so that $x_3-x_1$ is an irrational multiple of $\pi$.  Our above reasoning shows that $\sin(x_3)/\sin(x_1)$ is in $\mcb_1$ and hence by a limiting argument, $\sin(x_2)/\sin(x_1)=r_0$ is in $\mcb_1$ also.
\end{proof}

\subsection{Bulk Distribution of Zeros}\label{bulkz}

Let us now focus our attention on the distribution of the zeros of $\Phi_n(\lambda,\mu)$ for a fixed $\mu$ and large $n$.  Suppose $\lambda_1$ is such that $U_{k-1}\left(\frac{-\lambda_1-\mu}{4}\right)=0$ and $\lambda_2>\lambda_1$ is the next real number so that $U_{k-1}\left(\frac{-\lambda_2-\mu}{4}\right)=0$.  By the interlacing property of zeros of Chebyshev polynomials, it is true that $U_{k}\left(\frac{-\lambda_1-\mu}{4}\right)$ and $U_{k}\left(\frac{-\lambda_2-\mu}{4}\right)$ are both non-zero and have opposite sign.  Therefore, in moving from $\lambda_1$ to $\lambda_2$, the fraction $U_{k}\left(\frac{-\lambda-\mu}{4}\right)/U_{k-1}\left(\frac{-\lambda-\mu}{4}\right)$ takes all values in $\bbR$.  In particular, there is a $\lambda^*$ where $U_{k}\left(\frac{-\lambda^*-\mu}{4}\right)/U_{k-1}\left(\frac{-\lambda^*-\mu}{4}\right)=\mu$ and this is a zero of $H_{k+1}(\lambda,\mu)=G_{k}(\lambda,\mu)$.  Thus, the zeros of $G_k(\lambda,\mu)$ satisfy the same law as the zeros of $U_k$, but shifted by $\mu$ and rescaled by a factor of $4$.

Note that the above procedure only accounts for $k-2$ zeros of $G_k(\lambda,\mu)$, which has degree $k$.  To find the remaining two zeros, we apply the above reasoning with $\lambda_1$ replaced by $-\infty$ or $\lambda_2$ replaced by $\infty$.  Using the fact that we know how $U_k$ behaves at the endpoints of the support of its measure of orthogonality, we can see that when $|\mu|>1$, we will have one zero of $G_k$ predictably outside this (translated and rescaled) interval for all large $k$.  Using the ratio asymptotic formulas for the Chebyshev polynomials, we can see that this zero approaches $\mu+2/\mu$ as $k\rightarrow\infty$.

\section{The Relation with the Theory of Random Jacobi $\&$ Schr\"{o}dinger Operators}\label{schro}

Surprisingly, there is an interesting connection between the above ideas and the theory of random operators acting on $\ell^2(\bbZ)$.  This was first observed by L. Grabowski and B. Virag in \cite{G14,GV,KV17} who used this connection to show that the spectral measure associated with the operator $H_{\beta}$ of convolution with the element $a+a^{-1}+\beta c\in\bbR[\mcl]$ is singular continuous.  This was achieved through a modification of a method from \cite{Austin13}, which appeared in an unpublished work of Grabowski and Virag \cite{GV} and for which an exposition appears in the appendix of \cite{G15}.  The idea is to use an application of the Fourier transform using Pontryagin's dual $\hat{A}=\prod\bbZ/2\bbZ$ of the base $A=\bigoplus_{\bbZ}\bbZ/2\bbZ$ of the wreath product
\[
\mcl=\bbZ/2\bbZ\wr\bbZ=\left(\bigoplus_{\bbZ}\bbZ/2\bbZ\right)\rtimes\bbZ
\]
which transforms the operator $H_{\beta}$ into a random Schr\"{o}dinger operator $\tilde{H}_{\beta}$ acting on $\ell^2(\bbZ)$ by
\[
(\tilde{H}_{\beta,\omega}u)(n)=u(n-1)+V_{\omega}(n)u(n)+u(n+1),
\]
where $\{u(n)\}_{n\in\bbZ}\in\ell^2(\bbZ)$ and $\omega$ is a sequence of i.i.d random variables taking values in $\{0,1\}$ with Bernoulli distribution the assigns probability $1/2$ to $0$ and $1$.  The potential $V_{\omega}(n)$ is then a random potential that takes the value $\beta$ in the $n^{th}$ coordinate if the $n^{th}$ coordinate of $\omega$ is equal to $1$ and takes the value $0$ otherwise.  The surprising property of this correspondence is that the spectral measure of $H_{\beta}$ associated with the delta function $\delta_1\in\ell^2(\mcl)$ coincides with the density of states of the family $\{\tilde{H}_{\beta,\omega}\}_{\omega\in\Omega=\{0,1\}^{\bbZ}}$, which is the average over $\Omega$ of the spectral measures of $\tilde{H}_{\beta,\omega}$ associated with the delta function at the origin $\delta_0\in\ell^2(\bbZ)$.  This fact was discovered by L. Grabowski and is valid in much greater generality \cite{G14}.

Using results from \cite{MM} 
Grabowski and Virag concluded that $H_{\beta}$ has a singular continuous spectral measure when $\beta$ is sufficiently large.  This is the first example of a group and convolution operator on it with singular continuous spectral measure.

Our result allows one to reason in the opposite direction.  Using information about the spectrum and spectral measure of Markov operators $M_{\mu}$ of convolution on $\mcl$, we are able to deduce new information about the spectrum and the integrated density of states for dynamically defined random Jacobi operators, where the randomness is ruled by a uniform Bernoulli measure on the space $\{0,1\}^{\bbZ}$.  Recall that the classical Anderson model concerns the Schr\"{o}dinger operator $H_{\omega}$ acting on $\ell^2(\bbZ)$ by
\[
(H_{\omega}u)(n)=u(n-1)+V_{\omega}(n)u(n)+u(n+1),
\]
where $V_{\omega}(n)=g(T^n\omega)$, $T$ is the left shift in $\Omega=[a,b]^{\bbZ}$ with $a<b$, $a,b\in\bbR$, and $g:\Omega\rightarrow\bbR$ is the sampling function which is given by evaluation at the $0$ coordinate.  The randomness is described by the product measure $\nu=\rho^{\bbZ}$, where $\rho$ is a probability measure on $[a,b]$.  It is well-known that the spectrum of $H_{\omega}$ as a set is almost surely independent of $\omega$ and is equal to $[-2,2]+\supp(\rho)$.  Thus as a set, $\spm(H_{\omega})$ is equal to a union of finitely many intervals.  Applying this to the case considered by Grabowski in \cite{G14}, we find $\spm(\tilde{H}_{\beta,\omega})=\spm(H_{\beta})=[-2,2]\cup[-2+\beta,2+\beta]$.

We now consider the related operator acting on $\ell^2(\bbZ)$ given by \eqref{anderson}, where $f,g:[a,b]^{\bbZ}\rightarrow\bbR$, $\omega$ is as above, and the randomness is ruled as before by a probability measure $\nu=\rho^{\bbZ}$ where $\rho$ is a probability measure on $[a,b]$.  Thus, the operator in matrix form has the structure of a Jacobi matrix with entries determined by the functions $f$ and $g$.  It is quite probable that many results from the classical theory of dynamically defined Jacobi matrices associated with Schr\"{o}dinger operators can be translated to this more general setting.  Such a generalization in the case of minimal subshifts over a finite alphabet is done by Beckus and Pogorzelski in \cite{BP}.

Consider the operator $H_{\mu}$ obtained by applying the Fourier transform given by the Pontryagin duality between $\hat{A}$ and $A$ as described in \cite{G14,G15,G16,KV17} to the operator $M_{\mu}$ of convolution by the element $a+a^{-1}+b+b^{-1}-\mu c\in\bbR[\mcl]$ in $\ell^2(\bbZ)$.  This operator $H_{\mu}$ is of the form
\[
\int_{\Omega}H_{\mu,\omega}d\nu(\omega)
\]
in the Hilbert space
\[
\int_{\Omega}\ell^2(\Gamma_{\omega})d\nu(\omega)
\]
where $\Omega=[0,1]^{\bbZ}=\hat{A}$ serves as the dual of $A$ and $\Gamma_{\omega}$ is the $\bbZ$-orbit of $\omega$ for $\omega\in\Omega$ under the adjoint action of the active group $\bbZ$ of the wreath product $\bbZ/2\bbZ\wr\bbZ$ on $\hat{A}$ (which happens to be the action by powers of the shift).  If we identify $\Gamma_{\omega}$ with $\bbZ$ (for non-periodic $\omega$), then the generator of $\bbZ$ acts on $\Omega$ as the shift $T:(T\omega)_n=\omega_{n+1}$.
The operator $H_{\mu,\omega}$ is an operator of the type described in \eqref{anderson} with functions $f$ and $g$ defined on $\{0,1\}^{\bbZ}$ as
\[
f(\omega)=1+(-1)^{\omega_{-1}},\qquad\qquad g(\omega)=\mu(-1)^{\omega_0+1},
\]
where $\omega_n$ denotes the $n^{th}$ entry of $\omega\in\Omega$ and $\nu=\rho^{\bbZ}$, where $\rho$ is the uniform measure on the two-point set $\{0,1\}$.

Putting all the above together and keeping in mind that the matrices $H_{\mu,\omega}$ almost surely have a block diagonal structure with finite blocks, we come to the conclusion of Theorem \ref{ids}.

Theorem \ref{ids} shows that the nature of the density of states for random Jacobi matrices in the Anderson model can be very different from the case of random Schr\"{o}dinger operators.  Furthermore, along with the result in \cite{GZ01}, this provides perhaps the first nontrivial example of the exact computation of the integrated density of states for randomly defined Jacobi operators.

\section{Concluding Remarks and Open Problems}\label{end}

\subsection{The Resolvent Set}\label{reset}

Although Theorems \ref{t1} and \ref{t2} concern real values of $\mu$, one can consider complex values of $\mu$.
One can use similar calculations to those presented above to show that as $\mu$ varies through $\bbC$, the spectrum of $\mcj^*(\mu)$ is an interval or perhaps an interval and an isolated point.  The isolated point appears when $|\mu|>1$. 
We have also seen that the union of all the resolvent sets has four connected components in $\bbR^2$.


Now consider the resolvent set when $\mu$ is complex.  For a fixed $\mu$ the spectrum of $\mcj^*(\mu)$ is an interval and perhaps one isolated point.  For ease of visualization, we will consider the matrix $\mcj_1^*(\mu)$ (defined in the proof of Proposition \ref{spectrum}) instead of $\mcj^*(\mu)$.  In this case, the union of all the spectra is $\bbC\times[-2,2]\cup\mcs$, where $\mcs$ is the surface that is the image of $\{z:|z|>1\}$ under the map $z\rightarrow-z-1/z$, which is a bijection from $\{z:|z|>1\}$ to $\bbC\cup\{\infty\}\setminus[-2,2]$.  Thus, the union of all the spectra is the union of a three dimensional manifold with boundary (namely $\bbC\times[-2,2]$) and a two dimensional manifold (namely $\mcs$) and they are glued together in such a way that if $z_0\in\bbC$ has modulus $1$, then a boundary point of $\mcs$ attaches to the $z_0$ copy of $[-2,2]$ at the point $-z_0-1/z_0$.

It remains an open question to determine the joint spectrum of the pencil of operators $M_{\mu}-\lambda I$ acting on $\ell^2(\mcl)$ when $\mu,\lambda\in\bbC$.  The results of \cite{DG} and essential freeness of the action of $\mcl$ on $(\partial T,\nu)$, where $\nu$ is a uniform Bernoulli measure on $\partial T=\{0,1\}^{\bbN}$ show that for each $\mu\in\bbC$, the spectrum of $M_{\mu}$ coincides with the spectrum of the corresponding operator $\pi(m_{\mu})=\pi(a+a^{-1}+b+b^{-1}-\mu c)$ acting in $L^2(\partial T,\nu)$, where $\pi$ is the Koopman representation associated with the dynamical system $(\mcl,\partial T,\nu)$.  Our results describe the real joint spectrum (given by Figure 2).  The representation $\pi$ is a direct sum of finite dimensional subrepresentations, which implies that in the real situation (when $\mu\in\bbR$), it holds that
\begin{equation}\label{spu}
\spm\pi(m_{\mu})=\overline{\bigcup_{n=0}^{\infty}\spm\pi_n(m_{\mu})},
\end{equation}
where $\pi_n$, $n=0,1,2,\ldots$ are permutational representations given by the action of $\mcl$ on the level $n$ of the tree \cite{BG00,Gr11}.  If \eqref{spu} remains valid for complex values of $\mu$, then the part (a) of Theorem \ref{t1} remains true for $\mu\in\bbC$ and hence the joint spectrum of $M_{\mu}-\lambda I$ (or of $\pi(m_{\mu}-\lambda 1)$) is the closure of the union of curves $\{G_k(\lambda,\mu)=0\}$ for $k=0,1,2,\ldots$.

The validity of \eqref{spu} reduces to the question of whether or not the spectrum of the block diagonal matrix corresponding to the operator $\pi(m_{\mu})$ (whose blocks are finite matrices) is the closure of the union of the spectra of the blocks.  In the self-adjoint case, this is known to be true, but in the general case it need not hold.  Also, \eqref{spu} would hold if the matrix of $\pi(m_{\mu})$ is diagonalizable, but as our computations show, the eigenvalues of these matrices have large multiplicities (see also \cite{GZ01}), which complicates the issue of diagonalization.  We thus arrive at the following questions:

\begin{question}\label{q1}
Are the matrices $M_n(\mu)$ from Section \ref{spec} diagonalizable for all $\mu\in\bbC$?
\end{question}

\begin{question}\label{q2}
What is the joint spectrum of the pencil $M_{\mu}-\lambda I$ acting in $\ell^2(\mcl)$ (or equivalently, what is the joint spectrum of the pencil $\pi(m_{\mu}-\lambda 1)$ acting in $L^2(\partial T,\nu)$)?
\end{question}


Let $\{H_{\omega}\}_{\omega\in\Omega}$ be a random ergodic family of Jacobi operators on $\bbZ$ of the form \eqref{anderson} above.







An interesting question that is related to this discussion is about the decomposition of the representation $\pi_n$ into irreducible components.  If we know such a decomposition $\pi_n=\oplus_{i=1}^{\kappa_n}\pi_{n,i}$ with each $\pi_{n,i}$ irreducible, then for each element $m\in\bbC[G]$, the determinant $\det(\pi_n(m)-\lambda I_{2^n})$ (which is $\Phi_n(\lambda,\mu)$ in the notation of Section \ref{spec} with $m=m_{\mu}$) splits into the product $\prod_{i=1}^{\kappa_n}\det(\pi_{n,i}(m)-\lambda I_{2^n})$ and this reduces the problem to that of finding eigenvalues for the irreducible parts.  The high multiplicities of the eigenvalues suggests that perhaps the irreducible subrepresentations $\pi_{n,i}$ appear in the decomposition with high multiplicities.  However, the calculations for levels $n=1,2,\ldots,6,7$ show that this is not the case and that at least for these values of $n$, the pairs $(\mcl_n,\mathrm{st}_{\mcl_n}(1^n))$ are Gelfand Pairs, where $\mcl_n=\mcl/\mathrm{st}_{\mcl}(n)$, $\mathrm{st}_{\mcl}(n)$ is the stabilizer in $\mcl$ of level $n$, $\mathrm{st}_{\mcl_n}(1^n)$ is the stabilizer of the vertex $1^n=\underbrace{1\cdots1}_n$ for the natural action of the quotient group $\mcl_n$ on level $n$.

\begin{question}\label{q3}
For each $n$, what is the decomposition of $\pi_n$ into irreducible subrepresentations?
\end{question}

\subsection{Spectra and Spectral Measures}\label{qspec}

It is a widely open question as to what could be the shape of the spectrum $\spm(M_m)$ of convolution operators in $\ell^2(\mcl)$ given by elements $m\in\bbC[\mcl]$ of the group algebra.  Theorem \ref{t1} shows that in addition to the interval and union of two intervals, we can get an interval and a countable set of points accumulating to a point outside the interval.  As for the spectral measure $\mu_m$ (in the case of self-adjoint $m$) it can be a pure point measure (as shown in \cite{GZ01} and in this paper) or it can be a singular continuous measure (as in the case when $m=a+a^{-1}+\beta c$ with $\beta$ sufficiently large, where the spectrum is a union of two intervals).  Observe that all of this information is achievable already using elements $m$ of the form $\sum_{i=1}^kp_i(s_s+s_i^{-1})$, where $S=\{s_1,\ldots,s_k\}$ is a generating set of $\mcl$ and $p_i$ is a symmetric probabilistic distribution of $S\cup S^{-1}$ (thus corresponding to a Markov operator of a random walk on $\mcl$ given by this distribution).

\begin{question}\label{q4}
\begin{itemize}
\item[a)] What are all the shapes of the spectrum of operators of convolution $M_m$ in $\ell^2(\mcl)$ given by elements in $m\in\bbC[\mcl]$ of the group algebra?
\item[b)]  What are all the possible types of spectral measures  for operators as in part (a) with $m$ self-adjoint?
\end{itemize}
\end{question}

Let us elaborate on part (b) of Question \ref{q4}.  Any probability measure $\mu$ on $\bbR$ has a decomposition $\mu=\mu_{ac}+\mu_{sc}+\mu_{pp}$ into its absolutely continuous, singular continuous, and pure point parts.  Any of these parts may happen to be empty so, in principle, we may expect to have $8$ different possibilities for the decomposition with the nontrivial components.  From the above information, we know that $\mu=\mu_{sc}$ and $\mu=\mu_{pp}$ are possible.  What about the other $6$ cases?

Now let us restrict our attention to the operators $M_{m(\beta)}$ where $m(\beta)=a+a^{-1}+\beta c$.  Using the Fourier transform (as was explained above), an operator of this form transforms into a Schr\"{o}dinger operator with Bernoulli-Anderson random potential whose density of states $\mu_{m(\beta)}$ is continuous for all $\beta\in\bbR$, and is singular continuous when $\beta$ is sufficiently large \cite{MM}.

\begin{question}\label{q5}
For what values of $0\neq\beta\in\bbR$ does the density of states $\mu_{m(\beta)}$ have nontrivial absolutely continuous part?  What are the possibilities for $\mu_{m(\beta)}$ in terms of its absolutely continuous part and singular continuous part?
\end{question}


The values at zero of the spectral measure $\mu_m$ corresponding to operators of convolution $M_m$ in $\ell^2(G)$ given by elements $m\in\bbQ[G]$ are called ``$\ell^2$-Betti numbers arising \textit{from} the group $G$."  For a recursively presentable group $G$ (such as $\mcl$) they appear also as $L^2$-Betti numbers of closed manifolds \cite{GLSZ,Luck}.  Part (a) of the next question is \cite[Question 1.7]{G14}

\begin{question}\label{q6}
\begin{itemize}
\item[a)] What is the set of values of $\mu_m(0)$ for self-adjoint $m\in\bbQ[\mcl]$?
\item[b)] What is the set of values of $\mu_m(0)$ for self-adjoint $m\in\bbC[\mcl]$?
\item[c)] Do the sets from parts (a) and (b) coincide?
\end{itemize}
\end{question}

\begin{question}\label{q6.5}
For which rational values of $\mu$ does the operator $M_{\mu}$ given by $M_{\mu}=a+a^{-1}+b+b^{-1}-\mu1$ have a rational eigenvalues different from $\mu$?
\end{question}

The correspondence between convolution operators in $\ell^2[\mcl]$ and random generalized operators of Jacobi-Schr\"{o}dinger type should lead to interplay between methods of the theory of random operators and methods used in group theory, the theory of random walks on groups, and the theory of self-similar groups defined by finite automata.

The random operators corresponding to operators of convolution in $\ell^2(\mcl)$ are presented by random banded matrices of Jacobi type whose non-zero entries are of the form $f_i(T^k\omega)$ for $i=1,\ldots,k$ where $k$ is the number of upper diagonals, including the main diagonal, $\omega\in\Omega=\{0,1\}^{\bbZ}$, $T$ is a shift in $\Omega$, and $f_i$ is a function from $\Omega\rightarrow\bbR$ corresponding to the $i^{th}$ diagonal.  The pencil $M(\beta,\mu)=a+a^{-1}+\beta(b+b^{-1})+\mu c$, after an application of the Fourier transform, becomes the random Jacobi matrix ruled by two functions $f$ (on the off-diagonal)  and $g$ (on the diagonal).  Specifically,
\[
f(\omega)=1+\beta(-1)^{\omega_{-n}},\qquad\qquad
g(\omega)=\begin{cases}
\mu\qquad &\mathrm{if}\quad\omega_0=1\\
0 & \mathrm{otherwise}
\end{cases}
\]

\begin{question}\label{q7}
What is the joint spectrum of the pencil $M(\beta,\mu)-\lambda I$ when $\mu,\beta,\lambda\in\bbR$ and what is the corresponding spectral measure?
\end{question}

If $\beta\neq1$, then the off-diagonal function $f$ does not take the value $0$ and hence by the same argument as in \cite{DS84}, we know that the density of states is a continuous measure.  If $\beta=1$, then $f$ takes the value $0$ with positive probability and the argument from \cite{DS84} fails.  In fact, as we see from the results of \cite{GZ01} and this paper, the situation can be completely different from the $\beta\neq1$ case and the density of states may not only acquire mass points, but it could in fact become a pure point measure.

This matter is briefly discussed near the conclusion of \cite{DS84}, where it is written ``... the coefficients $J(x,y)$, $|x-y|=1$ could be zero with a non-zero probability, in which case the density of states would be discontinuous."  There is no argument provided in \cite{DS84} for this claim and it would be useful to clarify it.  If the word ``would" is replaced by the word ``could," then this paper and \cite{GZ01} confirm the claim.

The general case of random Jacobi-Schr\"{o}dinger matrices given by data $\{f_i(\omega)\}_{i=1}^k$ as described above looks to be too hard to handle.  The case of random Jacobi-Schr\"{o}dinger operators determined by two functions $f_1=g$ and $f_2=f$ and a Bernoulli system $(\Omega,T,\nu)$ seems to much more tractable and most of the known results of the classical theory should be valid for random Jacobi-Schr\"{o}dinger operators.  Some steps in this direction are done for instance in \cite{BP,BL85}.  The theorems of Pastur \cite{P80} about existence for the ergodic family $\{H_{\omega}\}_{\omega\in\Omega}$ of a set $\Sigma\subseteq\bbR$ such that $\spm(H_{\omega})=\Sigma$ extends to ergodic random Jacobi-Schr\"{o}dinger operators without extra efforts.  

\begin{question}\label{q8}
Let $\mcj_{\omega}(f,g)$ be a dynamically defined Jacobi matrix with diagonal entries determined by $g(T^n\omega)$ and off-diagonal entries determined by $f(T^n\omega)$.  Assume that $f(\omega)=0$ on a subset $\Omega_0\subset\Omega$ of positive measure and assume $\Omega\setminus\Omega_0$ has positive measure as well.  Will the density of states be a pure point measure?  Will it always have a non-trivial discrete component? Here $(T,\Omega,\nu)$ is a Bernoulli system (Anderson model).
\end{question}


\subsection{Novikov-Shubin invariants and Lifshits Tails}\label{nsinvar}

Given a self-adjoint $m\in\bbC[G]$, the Novikov-Shubin invariant of $m$ is defined as
\begin{equation}\label{nsdef}
\alpha(m)=\liminf_{\lambda\rightarrow0^+}\frac{\log(\mu_T(0,\lambda])}{\log\lambda}.
\end{equation}
Novikov-Shubin invariants were introduced for manifolds and then translated to the group case.  If $m\in\bbQ[G]$ and the group is recurseivel presentable, then $\alpha(m)$ can be realized as a fourth Novikov-Shubin invariant of some finite CW complex \cite{Luck}.  The papers \cite{G16,GZ04} show that already the lamplighter group $\mcl$ is rich enough to produce elements of $m\in\bbZ[G]$ with non-integer (or even irrational) $\ell^2$-Betti numbers (and with irrational Novikov-Shubin invariant).  In fact, the study of Novikov-Shubin invariants is a particular case of the study of regularity properties of the spectral distribution function $N_m(x):=\mu_m(-\infty,x]$.  For each point $E\in\spm(M_m)$ of the convolution operator $M_m$ given by $m\in\bbC[G]$, one can study local behavior of $N_m(x)-E_-$ on the left of $E$ (where $E_-=\lim_{x\rightarrow E_-}N(x)$ and local behavior of $N_m(x)-\mu_m(\{E\})$ to the right of $E$.  If on the right, one finds that $N_m(E+x)-\mu_m(\{E\})\sim x^{\alpha}$ for some $\alpha\in(0,\infty)$, then $\alpha$ is the Novikov-Shubin invariant of $m-\mu_m(\{E\})$.  But, the local behavior could be of type $-1/\log(x)$ or $e^{-1/x}$ or an even more exotic type, which corresponds to $\alpha=0$ or $\alpha=\infty$ respectively.  For different groups, elements $m\in\bbC[G]$ and different points of the spectrum dilatation behavior of $N_m(x)$ around $E\in\spm(M_m)$ can be very different.

Suppose $E_0$ and $E_+$ are extreme points of the spectrum of a convolution operator $M_m$ given by a self-adjoint element $m\in\bbC[\mcl]$ (so that the distribution function $N(x)$ of the spectral measure takes the value $0$ if $x<E_0$ and takes the value $1$ if $x>E_+$), then it is interesting to study the asymptotic behavior of $N(x)$ near these endpoints.  This is related to the study of not only Novikov-Shubin invariants but also the growth of the F\"{o}lner function $F(n)$ when $\nri$ \cite{GZ01} (see also \cite{KV17}) and of other group invariants.  Translated into the language of random operators, the asymptotics of $N(x)$ near $E_0$ corresponds to what is called Lifshitz tails \cite{KM07}.  For a random Schr\"{o}dinger operator on $\bbZ^d$, Lifshitz predicted
\[
N(x)\sim c_1e^{-c_2(x-E_0)^{-d/2}},
\]
which was later confirmed in some cases (for a related result, see \cite{Lukic}).  From \cite{GZ01} it follows that for random operators corresponding to an operator of convolution given by $\frac{1}{4}(a+a^{-1}+b+b^{-1})$, the function $1-N(x)$ behaves as $e^{-(x-1)^{-1}}$ near $E_+$.  It will be interesting to study what kind of asymptotic behavior $N(x)$ and $1-N(x)$ can exhibit near $E_0$ an $E_+$ respectively (as well as at any other point of the spectrum) for convolution operators given by elements $m\in\bbC[\mcl]$, in particular those outlined in Question \ref{q7}.

\medskip

\noindent\textbf{Example.}  Consider the case when $\mu>1$ so the spectrum of the operator $M_{\mu}$ acting in $\ell^2(\mcl)$ is described by part (b) of Theorem \ref{t1}.  Using Theorem \ref{t2}, we know that the corresponding spectral measure $\nu$ assigns weight $2^{-m}$ to the interval $[x_m,\mu+2/\mu]$, where $x_m$ is the largest zero of $G_m$.  It is known that $|x_m-\mu-2/\mu|$ decays exponentially, but to calculate the Novikov-Shubin invariant, we will need to know this exact rate of decay.  To do so, it suffices to find the smallest zero of $P_{m,\mu}$ (defined in \eqref{pgform}).

By the reasoning of Section \ref{bulkz}, we know that the zeros of $P_{m,\mu}$ consist of a single zero very close to $-\mu/2-1/\mu$ (call it $x_m^*$) and the remaining zeros are contained in the interval $[-2+\mu/2,2+\mu/2]$ and distribute (for $m$ large) according to the arcsine distribution on that interval.  Define $Q_m(x)=P_{m,\mu}(x)/(x-x_m^*)$.  Then it follows from (\ref{uratio}) that
\[
\left|Q_n\left(-\frac{\mu}{2}-\frac{1}{\mu}\right)\right|^{1/n}\rightarrow\mu
\]
as $\nri$.  By Poincar\'{e}'s Theorem \cite[Theorem 9.6.2]{OPUC2}, we know that
\[
\left|P_n\left(-\frac{\mu}{2}-\frac{1}{\mu}\right)\right|^{1/n}\rightarrow\frac{1}{\mu}
\]
as $\nri$.  Thus $|x_n^*+\mu/2+1/\mu|^{1/n}\rightarrow\mu^{-2}$ as $\nri$.  We conclude that, $\nu$ assigns mass $2^{-m}$ to an interval near the right endpoint of its support of length approximately $\mu^{-2m}$.  Using the definition \eqref{nsdef} we find that the Novikov-Shubin invariant in this case is
\[
\frac{\log2}{2\log\mu}=\log_{\mu}(\sqrt{2})=\frac{1}{2\log_2\mu}.
\]
Therefore, for the element $a+a^{-1}+b+b^{-1}-\mu c-\left(\mu+\frac{2}{\mu}\right)I\in\bbQ[\mcl]$ with integer $\mu\in\bbZ$ that is not a power of $2$, the ``true" Novikov-Shubin invariant given by (\ref{nsdef}) is irrational.  This recovers one of the results from \cite{G16}.

\bigskip

\noindent\textbf{Acknowledgments.}  We would like to thank D. Damanik, L. Grabowski, and R. Kravchenko for helpful discussion and remarks.  The first author graciously acknowledges support from the Simons Foundation through Collaboration Grant 527814, is partially supported by the mega-grant of the Russian Federation Government (N14.W03.31.0030), and also acknowledges the Max Planck Institute in Bonn where the work on the final part of this article was completed.

\vspace{10mm}

\end{document}